\documentclass[12pt]{article}
\usepackage{amsmath,amsfonts,amssymb,amsthm}
\usepackage{longtable}

\def\qed{{$\square$}}

\def\labtt#1{\label {#1} }
\def\refpp#1{\ref {#1}}

\def\a{\alpha}
\def\b{\beta}

\def\CC{{\mathbb C}}
\def\FF{{\mathbb F}}
\def\QQ{{\mathbb Q}}

\def\ZZ{{\mathbb Z}}
\def\NN{{\mathbb N}}

\def\la{\langle}
\def\ra{\rangle}
\def\<{\langle}
\def\>{\rangle}

\def\bs{\it}            

\def\dim{{\bs dim}}

\def\Sym{{\bs Sym}}

\def\sgn{{\rm sgn}}

\def\vac{\hbox{\bf 1}} 

\newcommand{\x}{\mathbf{x}}
\newcommand{\y}{\mathbf{y}}

\newtheorem{mainthm}{Main Theorem}
\newtheorem{thm}{Theorem}[section]
\newtheorem{prop}[thm]{Proposition}
\newtheorem{lem}[thm]{Lemma}
\newtheorem{rem}[thm]{Remark}

\newtheorem{de}[thm]{Definition}

\newtheorem{nota}[thm]{Notation}
\newtheorem{ex}[thm]{Example}

\begin{document}
\begin{center}

{\Large \bf
Lattice vertex algebras over a field of positive characteristic: degenerate cases
}

\vspace{5mm}
Robert L.~Griess, Jr. 
\\[0pt]
Department of Mathematics\\[0pt] University of Michigan\\[0pt]
Ann Arbor, MI 48109  USA  \\[0pt]
{\tt rlg@umich.edu}\\[0pt]
\vskip 0.3 cm

Ching Hung Lam
\\[0pt]
Institute of Mathematics \\[0pt]
Academia Sinica\\[0pt]
Taipei 10617, Taiwan\\[0pt]
{\tt chlam@math.sinica.edu.tw}\\[0pt]
\vskip 0.3cm
\end{center}

\medskip

\begin{abstract}
A vertex operator algebra $V_L$ associated with a positive definite even lattice $L$ has a standard integral form, which we denote it by $V_{L,\ZZ} $.   If $F$ is a field of characteristic $p>0$, it is known that $V_{L,F}:= F\otimes_\ZZ V_{L,\ZZ}$, a vertex algebra  over $F$, is  simple  if and only if $(p, \det(L))=1$. 

In this article, we study $V_{L,F}$ when the characteristic of $F$ divides $\det(L)$. We determine the radical $\mathrm{Rad}$  of the invariant bilinear form on $V_{L,F}$, show that it is the unique maximal ideal and study the quotient vertex algebra $V_{L,F}/\mathrm{Rad}$.    
\end{abstract}

\newpage 
\tableofcontents

\medskip

\begin{center}
{\bf \large Notation and Terminology}

{\small 
\begin{longtable}{|c|c|c|}
   \hline
\bf{Notation}& \bf{Explanation} & \bf{Examples in text}  \cr
   \hline\hline
$\delta$, $\delta_k$ & $\delta=\delta_k=(k-1, \dots, 1, 0)$ & Definition \ref{deslambda} \cr \hline 
$\Delta(x_1, \dots, x_k)$ & $\Delta(x_1, \dots, x_k) = \sum_{w\in Sym_k} \sgn(w) \x^{w(\delta)}$ & Equation \eqref{eq3}\cr \hline 
$h_{\lambda}(\alpha)$ &  $h_{\lambda}(\alpha)= s_{\alpha, \lambda_1}\cdots s_{\alpha, \lambda_k} $, $\lambda=(\lambda_1, \dots, \lambda_k)\in \ZZ$ & Remark \ref{rem3.5} \cr \hline 
$\langle\ , \ \rangle $, $\langle\ , \ \rangle_F$ & the symmetric invariant bilinear forms  & Notation \ref{standardif} \cr 
& on 
$ V_{L,\ZZ}$ and $V_{L,F}$ & \cr \hline    
$\mathcal{P}$ & the set of all partitions of positive integers & Section \ref{sec3.1}\cr \hline  
$\mathcal{P}_k$& the set of all partitions of length $\leq k$ & Section \ref{sec3.1}\cr \hline   
 $\mathrm{Rad}$& the radical of $\langle\ , \ \rangle_F$ on $V_{L,F}$&  Theorem  \ref{main} \cr \hline 
   $s_{\alpha, n}$ & the coefficient of $z^n$ in $\exp( \sum_{n>0} \frac{\alpha(-n)}{n} z^n)$&  Definition \ref{de_san}\cr
& i.e., $\exp\left( \sum_{n>0} \frac{\alpha(-n)}{n} z^n\right) = \sum_{n\geq 0} s_{\alpha,n}z^n$ & \cr \hline
$s_{\lambda} (\alpha)$& $s_{\lambda} (\alpha)= \det(s_{\alpha, \lambda_i -i+j})_{1\leq i,j\leq k}$ for $\lambda\in\mathcal{P}, \alpha\in L$ &  Definition \ref{deslambda} \cr \hline 
$s^{(s)}_\lambda( \alpha)$ & $s^{(s)}_\lambda( \alpha) =  \det(s_{\alpha, p^{s}(\lambda_i -i+j)})_{1\leq i,j\leq k}$, $p$ a prime & Definition \ref{de4.7} \cr \hline 
VA, VOA & vertex algebra, vertex operator algebra & Definitions \refpp{defva},\refpp{defvoa} \cr
\hline
$V_L$ & the lattice VOA associated with $L$ &  Introduction  \cr\hline
$V_{L,\ZZ}$  & the standard integral form of $V_L$ &  Notation \refpp{standardif}\cr
\hline
$V_{L,F}$  & the VOA $F \otimes_{\ZZ} V_{L, \ZZ}$ for a field $F$ &  Introduction \cr
\hline
\end{longtable}}
\end{center}

\section{Introduction}
Through out this article, we use the standard notation for the lattice vertex operator algebra $V_L = M(1) \otimes \CC\{L\}$ associated with a positive definite even lattice $L$  \cite{FLM}. In particular,
${\mathfrak h}=\CC\otimes_{\ZZ} L$  and  $\hat {\mathfrak{h}}={\mathfrak h}\otimes \CC[t,t^{-1}]\oplus \CC k$ is the corresponding affine Lie
algebra. Here $M(1)=\CC[\a_i(n)|1\leq i\leq d, n<0]$ is the polynomial ring in $\a_i(n)$,  where $\{\a_1,
\dots,\a_d\}$  is basis of $\mathfrak{h}$ and $\a(n)=\a\otimes t^n,$ for $\alpha\in L$ and $n\in \ZZ$.  Also, $\CC\{L\}=\mathrm{span}
\{e^{\beta}\mid \beta\in L\}$ is the twisted group algebra of the additive group
$L$ such that $e^\b e^\a=(-1)^{\la \a, \b\ra} e^\a e^\b$ for any $\a, \b\in L$.
The vacuum vector $\vac$  of $V_L$ is $1\otimes e^0$.  For
the explicit definition of the corresponding vertex operators, we shall refer to
\cite{FLM}.

The vertex operator algebra $V_L$ has a standard integral form \cite{ivoa,Pro} given by 
the $\ZZ$-span of
\[
\{ s_{\alpha_1,n_1}s_{\alpha_2,n_2} \cdots s_{\alpha_k,n_k}\otimes e^\alpha \mid \alpha_i, \alpha\in L , n_i >0\}, 
\]
where $s_{\alpha, n}$ is the coefficient of $z^n$ in
$E^-(-\alpha, z) = \exp( \sum_{n>0} \frac{\alpha(-n)}{n} z^n)$, i.e,
\[
\exp\left( \sum_{n>0} \frac{\alpha(-n)}{n} z^n\right) = \sum_{n\geq 0} s_{\alpha,n}z^n.
\]
We denote this integral form by $V_{L,\ZZ} $.    
 
Let $F$ be a field of characteristic $p>0$. Then  $V_{L,F}:= F\otimes_\ZZ V_{L,\ZZ}$ is a vertex algebra over $F$ (cf. Definition \ref{defva}).  It is known \cite{Mu} that $V_{L,F}$ is a simple vertex operator algebra over $F$ if and only if $(p, \det(L))=1$.  

In this article, we study the  vertex algebra $V_{L,F}$ when the characteristic of $F$ divides $\det(L)$. We will determine the unique maximal ideal $J$ of $V_{L,F}$ and study its simple quotient
 $V_{L,F}/J$. 
 
The main idea is to consider the symmetric invariant bilinear forms $\langle\ , \ \rangle $ on 
$ V_{L,\ZZ}$ and $V_{L,F}$.  
It turns out that there is an explicit formula for calculating the values of 
$\langle s_{\alpha_1,n_1}\cdots s_{\alpha_k,n_k}\otimes e^\alpha, s_{\beta_1,m_1} \cdots s_{\beta_\ell,m_\ell}\otimes e^{\beta}\rangle$ (cf. Lemma \ref{innerprod}).  
We will  describe the radical of $\langle\ , \ \rangle_F$ by using this formula.

The Smith invariant theory for the containment  $L\subseteq L^*$ gives a basis $\{x_1, \dots, x_d\}$ of the dual lattice $L^*$ and a set of positive integers $a_1| a_2|\cdots| a_d$ such that 
$\{a_1x_1, a_2x_2, \dots, a_dx_d\}$ is a basis of $L$.

Let $\alpha_i=a_ix_i$ and let $s_i$ be the largest non-negative integer such that $a_i=p^{s_i} r_i$.  The key observation is that $
\langle s_{\alpha_i,n}, V_{L, F} \rangle =0  $   
for any $n\in \ZZ_{>0}$ such that ${p^{s_i}\not|\,n}$ (cf. Lemma \ref{sRi}).   
It turns out that the radical of $\langle\ , \ \rangle$ on $V_{L,F}$ is given by 
$K\otimes F\{L\}$, where $K$ is the ideal of $M(1)_F$ generated by 
$$\{ s_{\alpha_i,n}\mid n\in \ZZ_+,\ p^{s_i}\not|\, n, i=1,\dots,d\}.$$

Note that $\mathrm{Rad}$, the radical of  of $\langle\ , \ \rangle_F$,  is an ideal of $V_{L,F}$ since $\langle \ , \ \rangle_F$ is an invariant bilinear form and $V_{L,F}/\mathrm{Rad}$ is a simple vertex algebra \cite{Li}.  Therefore, the unique maximal ideal $J$ of $V_{L,F}$ is the same as $\mathrm{Rad}$.  

The main result of this article is as follows (cf. Theorem \ref{main}). 

\begin{mainthm}\label{mainthm}
Let $L$ be a positive definite even lattice and let $F$ be a field of characteristics $p>0$. 
Let 
\[
U_F = F[s_{\alpha_i, m}\mid    m\in \ZZ_+,  p^{s_i}| m \text{ for all }i=1, \dots, n] \otimes F\{L\}.  
\]
Then  the bilinear form $\langle \ ,  \ \rangle$ on $U_F$ is non-degenerate.  Thus, the radical $\mathrm{Rad}$ of 
$\langle \  ,  \ \rangle$ is given by $K\otimes F\{L\}$  and 
\[
V_{L,F}/ \mathrm{Rad}\cong U_F= F[ s_{\alpha_i, p^{s_i} n}\mid  n\in \ZZ_+, i=1, \dots,d] \otimes F\{L\} 
\]
as vector spaces. 
\end{mainthm}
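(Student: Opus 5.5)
The plan is to split $V_{L,F}$ into weight spaces indexed by $\beta\in L$ and reduce everything to a determinant computation for Schur-type elements.

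\textbf{Reduction to $M(1)_F$.} Since $\langle u\otimes e^\beta, v\otimes e^\gamma\rangle_F=0$ unless $\beta=\gamma$, it suffices to work one coset at a time. By the formula of Lemma \ref{innerprod}, the form on $M(1)_F\otimes e^\beta$ does not depend on $\beta$ (up to a nonzero scalar). So it is enough to show two things about $M(1)_F$:
\begin{itemize}
\item $K$ lies in the radical;
\item the form restricted to $A:=F[s_{\alpha_i,p^{s_i}n}]$ is nondegenerate.
\end{itemize}

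\textbf{Splitting $M(1)_F$.} First I would check that $M(1)_F = A\oplus K$ as vector spaces. The integral form of $M(1)$ is the polynomial ring over $\ZZ$ in the $s_{\alpha_i,n}$, because $\{\alpha_i\}$ is a basis of $L$. Over $F$ it is therefore the polynomial ring in the $s_{\alpha_i,n}$. Monomials containing some $s_{\alpha_i,n}$ with $p^{s_i}\nmid n$ span $K$, and the remaining monomials span $A$.

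\textbf{$K$ lies in the radical.} Lemma \ref{sRi} gives $\langle s_{\alpha_i,n},V_{L,F}\rangle=0$ for $p^{s_i}\nmid n$. Invariance of the form converts multiplication by a generator into its adjoint, which is a combination of modes of $V_{L,F}$. Since the radical is an ideal \cite{Li}, the ideal $K$ generated by these elements lies in $\mathrm{Rad}$.

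\textbf{Nondegeneracy on $A$.} This is the heart of the proof. I would change generators to the dual basis $x_i=\alpha_i/a_i$ of $L^*$. The adjoint of $\alpha_i(-n)$ pairs against $\alpha_j(n)$ through $\langle\alpha_i,\alpha_j\rangle$, and in the $x$-coordinates this Gram matrix becomes diagonal up to units: $\langle\alpha_i,x_j\rangle=a_i\delta_{ij}$. The Smith form arranges this, though the variables are not truly orthogonal, so I would work with the dual pairing between the $\alpha$-generators and the $x$-generators.

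The computation then reduces to one variable at a time, $\alpha=\alpha_i$ with $a=a_i=p^{s}r$. The relevant pairing is
\[
\langle s_{\lambda}(\alpha), s_{\mu}(x)\rangle = a^{|\lambda|}\times(\text{Schur-orthogonality}) .
\]
It needs to be rescaled to the elements $s^{(s)}_\lambda(\alpha)$ of Definition \ref{de4.7}, which involve only the $s_{\alpha,p^s m}$ and span $A$. The claim to prove is that, after dividing by the appropriate power of $a$ (compatibly with integrality), the Gram matrix of $\{s^{(s)}_\lambda(\alpha)\}_\lambda$ in degree $p^s N$ is unitriangular modulo $p$ with respect to dominance order.

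To get this I would use a Frobenius-type congruence: modulo $p$, $s^{(s)}_\lambda(\alpha)$ behaves like the plethysm $s_\lambda\circ p_{p^s}$. Its pairing with itself then picks up a factor $(p^s)^{\ell}$ that exactly matches the powers of $a$ coming from $\langle\alpha,\alpha\rangle$. Combining these, the rescaled Gram matrix is an $r$-power times $\pm 1$ on the diagonal.

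\textbf{Conclusion and main obstacle.} Nondegeneracy on $A$ together with $K\subseteq\mathrm{Rad}$ and $M(1)_F=A\oplus K$ gives $\mathrm{Rad}=K\otimes F\{L\}$ and $V_{L,F}/\mathrm{Rad}\cong U_F$. The main obstacle is the determinant and valuation count in the previous step: proving that the Gram matrix of the $s^{(s)}_\lambda$ has $p$-adic valuation exactly that of the normalizing factor. I would attempt this via the generating function $E^-(-\alpha,z)$, using the substitution $z\mapsto z^{p^s}$ together with the Jacobi–Trudi determinant identity $\Delta$ from Equation \eqref{eq3}.
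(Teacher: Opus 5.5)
Your outer framework matches the paper: you pair $M(1)_Fe^{\beta}$ against $M(1)_Fe^{-\beta}$ (note $-\beta$, not $\beta$), split $M(1)_F=A\oplus K$, and get $K\subseteq\mathrm{Rad}$ from Lemma~\ref{sRi}. But the nondegeneracy step, which is the real content of the theorem, is not proved, and the route you sketch for it fails.

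The identity $\langle\alpha_i,x_j\rangle=a_i\delta_{ij}$ is false. In fact $\langle\alpha_i,x_j\rangle=a_i\langle x_i,x_j\rangle$, which is in general neither diagonal nor equal to $a_i$ on the diagonal (in rank one it equals $1$). The basis dual to $\{\alpha_i\}$ is $\{\beta_j\}$, not the Smith basis $\{x_j\}$. So the generating function does not factor over $i$, and there is no reduction to one variable.

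The formula $\langle s_\lambda(\alpha),s_\mu(x)\rangle=a^{|\lambda|}\cdot(\text{Schur orthogonality})$ is also wrong. By Lemma~\ref{innerprod}, already for one variable $\langle s_{\alpha,m},s_{x,n}\rangle=\delta_{mn}(-1)^n\binom{c}{n}$ with $c=\langle\alpha,x\rangle$.

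Worse, when $s_i>0$ neither $x_i$ nor $\beta_i$ lies in $\ZZ_{(p)}\otimes L$. So $s_{x_i,n}$ involves non-$p$-integral binomial coefficients and is not an element of $V_{L,F}$. ``Dividing by the appropriate power of $a$'' means dividing by a multiple of $p$, which cannot be made compatible with reduction mod $p$.

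Retreating to the self-Gram matrix of the $s^{(s)}_\lambda(\alpha_i)$ does not work either. Distinct $\alpha_i$ need not be orthogonal, and $\langle\alpha_i,\alpha_i\rangle$ can have $p$-valuation larger than $s_i$. For example, take $L=\ZZ e\perp\ZZ f\cong A_1\perp A_1$ and $p=2$. The admissible Smith vector $\alpha_1=e+f$ has $s_1=1$ but $\langle\alpha_1,\alpha_1\rangle=4$, so $\langle s_{\alpha_1,2},s_{\alpha_1,2}\rangle=\binom{4}{2}\equiv 0\pmod 2$. Thus the step you label the main obstacle is left open, and the valuation/triangularity count you propose for it is not viable.

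The missing idea is to pair $A$ against explicit partners that actually live in $V_{L,F}$.
\begin{itemize}
\item Put $\gamma_i=p^{s_i}\beta_i$, so that $\langle\alpha_i,\gamma_j\rangle=p^{s_i}\delta_{ij}$.
\item The lattice $M$ spanned by the $\gamma_i$ contains $L=\mathrm{span}\{r_i\gamma_i\}$ with index $r_1\cdots r_d$, which is prime to $p$. Hence each $\gamma_i$ is a $\ZZ_{(p)}$-combination of the $\alpha_j$. Expanding $\prod_j(\sum_m s_{\alpha_j,m}z^m)^{c_{ij}/b_{ij}}$ with $p$-integral binomial coefficients shows that $s^{(s_i)}_\mu(\gamma_i)$ is a well-defined element of $M(1)_F$.
\item Since $\langle\alpha_i,\gamma_j\rangle=0$ for $i\neq j$, the generating function of Lemma~\ref{innerprod} factors over $i$.
\item Since $(1-xy)^{p^{s_i}}\equiv 1-(xy)^{p^{s_i}}\pmod p$, the (dual) Cauchy identity in the variables $x^{p^{s_i}}$, which is \eqref{ssr}, makes the families $\{s^{(s_i)}_\lambda(\alpha_i)\}$ and $\{s^{(s_i)}_\mu(\pm\gamma_i)\}$ dual up to a signed permutation.
\item Finally, replace each partner by its projection under $P_M$ into $A=M_F$. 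As $K\subseteq\mathrm{Rad}$, the pairings are unchanged, so $U_F$ contains a dual basis to its own basis. This gives nondegeneracy and $\mathrm{Rad}=K\otimes F\{L\}$.
\end{itemize}
Your substitution $z\mapsto z^{p^s}$ is the right tool. It has to be applied to a pair whose inner product is exactly $p^{s_i}$, and then no determinant or valuation count is needed.
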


The organization of the article is as follows. In Section 2, we review some basic definitions for vertex algebras over commutative rings and their integral forms. In Section 3, we recall some results from \cite{borcherds,ivoa,Pro} about the standard integral form 
of lattice vertex operator algebras. We also review  some formulas for calculating the values of the symmetric invariant bilinear form (cf. Lemmas \ref{innerprod} and \ref{slambda1}).   
In Section 4, we study and determine the radical of the symmetric invariant bilinear form $\langle\ ,\ \rangle$ on $V_{L,F}$ if the characteristic of $F$ divides $\det(L)$.
Some explicit examples are also discussed.

\section{Vertex algebras over commutative rings}

We begin by reviewing some basic concepts.

\begin{de}[\cite{borcherds,BR,FLM}]\labtt{defva}
Let $R$ be a commutative ring with identity.  A vertex algebra (VA) over $R$  is a $R$-module $V$ equipped with a linear map
\begin{eqnarray*}
Y(\ ,z):V &\longrightarrow &\mathrm{End}\ V\left[ \left[ z,z^{-1}\right]
\right] \\
v &\longrightarrow &Y(v,z)=\sum_{i\in \mathbb{Z}}v_{i}z^{-i-1}
\end{eqnarray*}
and a linear map $T:V \to V$ such that the following conditions hold:

\begin{enumerate}
\item  \textbf{(Vacuum condition)} there is a vector $\vac$ such that
\[
Y(\vac,z)=id_V = id_V \cdot z^0;
\]

\item  \textbf{(Creation property)} $Y(a,z)\cdot \vac\in V\left[ \left[ z\right] \right]
$ and $a_{-1}\cdot \vac =a$ for any $a\in V$;

\item  for any $a,b\in V$,  $a_{n}b=0$  for  $n$  sufficiently large;

\item  \textbf{(Translation  property)}   $T\vac =0$ and
\[
[T, Y( a,z)] =Y(Ta, z)= \frac{d}{dz}Y(a,z);
\]

\item  \textbf{(Borcherds Identity)} for any $a,b,c\in V$ and  $m,n, q\in \mathbb{Z}$,
\[
\begin{split}
&\sum_{i\geq 0} \binom{m}{i} (a_{q+i}b)_{m+n-i} c\\
 = & \sum_{i\geq 0} (-1)^ i  \binom{q}{ i} \left( a_{m+q-i} (b_{n+i}c) -  (-1)^q  b_{n+q-i} (a_{m+i} c) \right).
 \end{split}
 \]
\end{enumerate}

A vertex algebra $V$ is $\mathbb{Z}$-graded if there is a nontrivial direct sum decomposition  $ V=\oplus _{n\in \ZZ}  V_{n}$ such that  for all $m,n$, whenever   $a\in V_n$, $b\in V_m$,  then $a_k b\in V_{n+m-k-1}$ for any integer $k$.
\end{de}

\begin{de}\labtt{defvoa}
A $\ZZ$-graded VA  $V=\oplus _{n\in \ZZ}  V_{n}$ over $R$ is said to be a  \emph{vertex operator algebra (VOA)}  if there is an element $\omega \in V$ such that the operators $L(n)=\omega_{n+1}, n\in \ZZ,$ satisfy the Virasoro relation
\[
[L(m) , L (n)] = (m - n) L (m + n) + \binom{m+1}3 \delta_{m+n,0} c'
\]
for  any $m, n\in \ZZ$, where $c' \in R$.   We call  $2c'$ the rank or the central charge of $V$.   Moreover,
\begin{enumerate}
\item  the $\ZZ$-grading of $V$ is compatible with the action of $L(0)$, i.e.,
\[
L (0) v = nv \quad \text{ for any }v \in V_n;
\]
(note:  $n\in \ZZ$, whereas the eigenvalues of $L(0)$ are contained in $\ZZ\cdot 1_R \subseteq   R$)

\item $\dim V_n< \infty $ for any $n\in \ZZ$ and $V_n=0$ for $n$ sufficiently negative;

\item $T=L(-1)$. In particular,  $\frac{d} {dx} Y (v, z) = Y (L ( -1) v, z)$ for any $v\in V$.
\end{enumerate}
\end{de}

\begin{rem}
 Our definition of rank is worded  differently from the usual one since the ring $R$ may not contain $\frac 12$.
\end{rem}

\begin{de}\labtt{defif} Suppose that $V$ is a VOA (over the complex numbers) with a
nondegenerate symmetric invariant bilinear form.

 An integral VOA form  for $V$ is an abelian subgroup $J$ of $(V, +)$ such that $J$ is a VA over
$\mathbb{Z}$; there exists a positive integer $s$ so that $s\omega\in J$; for each $n$, $J_n := J\cap V_n$
is an integral form of $V_n$; $\langle J, J\rangle \subseteq \mathbb{Q}$. Since $J$ is a VA, $\mathbf{1}\in J$. For each degree $n$, $J_n$ has finite rank, whence there is an integer $d(n) > 0$ so that
$d(n)\cdot \langle J_n, J_n\rangle \subseteq \mathbb{Z}$.
\end{de}

\begin{rem}
An integral form $J$ of a VOA over $\CC$ will be a VA over $\mathbb{Z}$. If $R$ is a commutative ring, then $J\otimes_\ZZ
R $ is a VA over $R$.
\end{rem}

\section{Integral forms for lattice vertex operator algebras}\label{sec:2}

In \cite{borcherds,ivoa,Pro}, an integral form of the lattice VOA $V_L$ has been investigated. We first recall their results.

\begin{de}\label{de_san}
For any $\alpha \in L$, let $s_{\alpha, n}$ be the coefficient of $z^n$ in\\
$E^-(-\alpha, z) = \exp( \sum_{n>0} \frac{\alpha(-n)}{n} z^n)$, i.e,
\[
\exp\left( \sum_{n>0} \frac{\alpha(-n)}{n} z^n\right) = \sum_{n\geq 0} s_{\alpha,n}z^n.
\]
\end{de}

\begin{nota}\labtt{standardif}
By \cite[Theorem 3.3]{ivoa} (see also \cite[Section 4.4]{Pro}),
the $\ZZ$-span of
\[
\{ s_{\alpha_1,n_1}s_{\alpha_2,n_2} \cdots s_{\alpha_k,n_k}\otimes e^\alpha \mid \alpha_i, \alpha\in L , n_i >0\}.
\]
is an integral form of $V_L$.  We call this integral form the standard integral form for $V_L$ and denote it by $V_{L,\ZZ}$.
\end{nota}

There is a unique nondegenerate symmetric invariant bilinear form $\langle \cdot, \cdot\rangle$ on $V_L$ such that 
\[
\begin{split}
&\langle e^\alpha, e^\beta \rangle =\delta_{\alpha+\beta,0}, \\
&\langle \alpha(n)u, v\rangle = -\langle u, \alpha(-n)v\rangle, 
\end{split}
\]
for any $u,v\in V_L$, $\alpha, \beta\in L$ and $n\in \ZZ$ \cite{FLM}. 

\medskip

The bilinear form $\langle \cdot, \cdot\rangle|_{V_{L,\ZZ}}$ was studied in \cite{ivoa}. 
Recall from \cite[page 93]{FLM} that 
\[ 
\left[ \sum_{m\in \ZZ_+} \frac{-\alpha(m)w^m}{m}, 
\sum_{n\in \ZZ_+} \frac{\beta(-n)x^{n}}{n}\right] = -\langle \alpha, \beta\rangle
\sum_{n\in \ZZ_+}\frac{(wx)^n}{n} = \langle \alpha, \beta\rangle \log(1-wx)
\]
and hence 
\[
\begin{split}
&\sum\langle s_{\alpha_1,m_1} \dots s_{\alpha_k,m_k}e^\alpha, s_{\beta_1,n_1} \dots s_{\beta_\ell,n_\ell}e^\beta \rangle 
w_1^{m_1}\cdots w_k^{m_k} x_1^{n_1} \cdots x_\ell^{n_{\ell}}\\
= &
\langle \exp\left(
\sum_{i=1}^k \sum_{n\geq 1} \frac{\alpha_i(-n)}{n} w_i^n \right) e^\alpha, 
\exp\left(\sum_{j=1}^\ell \sum_{n\geq 1} \frac{\beta_j(-n)}{n} x_j^n \right ) e^{\beta}\rangle\\
=& 
\langle  e^\alpha, 
\exp\left(-
\sum_{i=1}^k \sum_{n\geq 1} \frac{\alpha_i(n)}{n} w_i^n \right)\exp\left(\sum_{j=1}^\ell \sum_{n\geq 1} \frac{\beta_j(-n)}{n} x_j^n \right ) e^{\beta}\rangle\\
=& \prod_{i,j} (1-w_ix_j)^{\langle \alpha_i, \beta_j \rangle}\delta_{\alpha,-\beta}. 
\end{split}
\]
In particular, we have the following formula (see \cite[(3.1)]{ivoa}). 
\begin{lem}\label{innerprod}
For any $\alpha_1,\dots, \alpha_k, \beta_1, \dots, \beta_\ell, \alpha, \beta \in L$, we have 
\begin{equation*} 
	\begin{split}
& \sum\langle s_{\alpha_1,m_1} \dots s_{\alpha_k,m_k}e^\alpha, s_{\beta_1,n_1} \dots s_{\beta_\ell,n_\ell}e^\beta \rangle 
w_1^{m_1}\cdots w_k^{m_k} x_1^{n_1} \cdots x_\ell^{n_{\ell}}\\
=  &\prod_{i,j} (1-w_ix_j)^{\langle \alpha_i, \beta_j \rangle}\delta_{\alpha,-\beta}, 
\end{split}
\end{equation*}
where the sum is over all $m_1, \dots, m_k,n_1,\dots, n_\ell \in \ZZ_{\geq 0}$ and  $ 1\leq i\leq  k$, $ 1 \leq j \leq \ell$.  
\end{lem}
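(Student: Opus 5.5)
The identity follows from the generating function computation displayed just before the statement, so the plan is to make that computation rigorous step by step. First, by Definition \ref{de_san}, for each $i$ we have $\sum_{m\ge 0} s_{\alpha_i,m}w_i^m = \exp\bigl(\sum_{n\ge1}\frac{\alpha_i(-n)}{n}w_i^n\bigr)$. The Heisenberg operators $\alpha_i(-n)$ with $n>0$ mutually commute, so the product of these generating series over $i$ equals the single exponential $\exp\bigl(\sum_i\sum_n \frac{\alpha_i(-n)}{n}w_i^n\bigr)$. Multiplying it by $e^\alpha$ gives the generating function of the left-hand vectors $s_{\alpha_1,m_1}\cdots s_{\alpha_k,m_k}e^\alpha$. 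The same holds for the right-hand vectors, with variables $x_j$. By bilinearity, the left side of the lemma is the form evaluated on these two formal series, computed coefficientwise. Each coefficient is a finite computation because each graded piece is finite-dimensional.

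Next we move the first exponential across the form. The adjoint relation $\langle \alpha(n)u,v\rangle=-\langle u,\alpha(-n)v\rangle$ extends to polynomials in the $\alpha(-n)$, and hence coefficientwise to the exponential. The adjoint of $\exp\bigl(\sum \frac{\alpha_i(-n)}{n}w_i^n\bigr)$ is therefore $\exp\bigl(-\sum \frac{\alpha_i(n)}{n}w_i^n\bigr)$. The key step is then to commute this annihilation exponential $A$ past the creation exponential $B=\exp\bigl(\sum_j\sum_n\frac{\beta_j(-n)}{n}x_j^n\bigr)$. Their exponents $a$ and $b$ satisfy $[a,b]=\sum_{i,j}\langle\alpha_i,\beta_j\rangle\log(1-w_ix_j)$, by the commutator formula quoted from \cite[page 93]{FLM}. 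This bracket is a central scalar series, so the Baker--Campbell--Hausdorff relation gives $e^ae^b=e^be^ae^{[a,b]}$. Hence
\[
AB = B\,A\prod_{i,j}(1-w_ix_j)^{\langle\alpha_i,\beta_j\rangle}.
\]

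Finally, $A$ acts on $e^\beta$ as the identity, since each $\alpha(n)$ with $n>0$ kills $e^\beta$. Moving $B$ back to the left as $\exp(-\sum\beta_j(n)x_j^n/n)$ acting on $e^\alpha$, it likewise acts as the identity. What remains is $\langle e^\alpha,e^\beta\rangle=\delta_{\alpha+\beta,0}$ times the scalar product above, which is the claimed formula.

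The only delicate point is justifying the manipulation of infinite exponentials. This can be handled in either of two ways:
\begin{itemize}
\item grade by total degree in the variables $w_i,x_j$ and note that only finitely many terms contribute to each coefficient;
\item work over $\QQ$ in the completed algebra, where the exponential identities are standard.
\end{itemize}
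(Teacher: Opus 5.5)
Your proposal is correct and follows the paper's own argument: expand both vectors as exponential generating series, move the first exponential across the form via the adjoint relation, and commute the annihilation exponential past the creation one using the commutator $\sum_{i,j}\langle\alpha_i,\beta_j\rangle\log(1-w_ix_j)$ from \cite[page 93]{FLM}. The paper leaves two steps implicit, and you spell both out: the central-commutator identity $e^ae^b=e^be^ae^{[a,b]}$, and moving $B$ back so that it acts trivially on $e^\alpha$.
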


By Lemma \ref{innerprod}, $\langle u,v\rangle \in \ZZ$ for any $u,v\in V_{L, \ZZ}$. Thus, the restriction 
of $\langle \cdot, \cdot \rangle$ to $V_{L,\ZZ}$ defines  a non-degenerate $\ZZ$-valued  symmetric invariant bilinear form on $V_{L, \ZZ}$.

\subsection{Schur function} \label{sec3.1}  
Next we consider another basis of $V_{L,\ZZ}$.  
Let $\mathcal{P}$ be the set of all partitions of positive integers. Let 
$\mathcal{P}_k$ be the set of all partitions of length $\leq k$.  

\begin{de}\label{deslambda}
For  $a=(a_1,a_2, \dots, a_k)\in \ZZ_{\geq 0}^k$,  denote 
$h_a(\alpha) = s_{\alpha,a_1} \cdots s_{\alpha,a_k}$ for any $\alpha\in L$.
Set  $s_{\alpha,n}:=0 $ for any $n <0$. Then $h_a(\alpha)$ is also well-defined for any $a\in\ZZ^k$.  
Following \cite{LP,Pro} (see also \cite{ivoa}) , we define 
\begin{equation} \label{slambda0}
s_\lambda(\alpha):= \sum_{w\in Sym_k} \sgn(w) h_{\lambda+\delta-w(\delta)}(\alpha),
\end{equation} 
for $\lambda=(\lambda_1, \lambda_2, \dots, \lambda_k)\in 
\ZZ^k$, where $\sgn(w)\in \{\pm 1\}$ denotes the sign of $w$ and $\delta=\delta_k =(k-1,k-2, \dots,1, 0)$.  

Note that $\delta-w(\delta)= (1-w^{-1}(1), 2-w^{-1}(2), \dots, k-w^{-1}(k))$ for any $w\in Sym_k$. 
Then we also have 
\begin{equation} \label{slambda}
s_{\lambda} (\alpha)= \det(s_{\alpha, \lambda_i -i+j})_{1\leq i,j\leq k}. 
\end{equation}
\end{de}

\begin{rem}\label{rem3.5}
Note that $s_{\lambda} (\alpha)\in M(1)_\ZZ$ for any $\alpha\in L$  and $\lambda\in \ZZ^k$. It is proved in \cite{LP} that $s_\lambda(\alpha)=0$ unless there exist $w\in \Sym_k$ and a partition $\mu\in \mathcal{P}$ such that  $\lambda+ \delta=w(\mu+\delta)$.  Moreover,  $s_\lambda(\alpha)=\sgn(w)s_\mu(\alpha)$ if $\lambda+ \delta=w(\mu+\delta)$. 
\end{rem}

Let $x_1, x_2\dots, x_k$ be mutually commuting formal variables. For $a=(a_1, \dots, a_k)\in \ZZ^k$, we use $\x^a$ to  denote $x_1^{a_1} x_2^{a_2} \cdots x_k^{a_k}$.  Set $\Delta(x_1, \dots, x_k) : = \sum_{w\in Sym_k} \sgn(w) \x^{w(\delta)}$. The following formula is shown in \cite[Proposition 7.2]{LP}: 
 \begin{equation}\label{eq3}
	\Delta(x_1, \dots, x_k)\sum_{\lambda\in \ZZ^{k}} h_{\lambda}(\alpha) \x^\lambda =  \sum_{\lambda\in \ZZ^{k}} s_\lambda(\alpha) \x^{\lambda+\delta}. 
\end{equation}

\begin{lem}[\cite{LP,Pro}]\label{slambda1}
For any $k\in \ZZ_{>0}$, we have 
\[
\sum_{\lambda\in \ZZ^{k}} h_{\lambda}(\alpha) \x^\lambda = \prod_{1\leq i<j\leq k} (1-\frac{x_j}{x_i})^{-1} \sum_{\lambda\in \ZZ^{k}} s_\lambda(\alpha) \x^\lambda. 
\]	
\end{lem}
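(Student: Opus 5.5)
We derive Lemma \ref{slambda1} directly from identity \eqref{eq3}, treating everything as formal Laurent series in $x_1,\dots,x_k$ expanded in the region where $|x_1|>|x_2|>\cdots>|x_k|$. In other words, $(1-x_j/x_i)^{-1}$ for $i<j$ means $\sum_{n\ge 0}(x_j/x_i)^n$.

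The first step is to rewrite $\Delta$ as a product via the Vandermonde identity:
\[
\Delta(x_1,\dots,x_k)=\det(x_i^{k-j})=\prod_{1\le i<j\le k}(x_i-x_j)=\x^{\delta}\prod_{1\le i<j\le k}\Bigl(1-\frac{x_j}{x_i}\Bigr).
\]
Here we used $\prod_{i<j}x_i=x_1^{k-1}x_2^{k-2}\cdots x_k^0=\x^\delta$.

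Substituting this into \eqref{eq3} gives
\[
\x^{\delta}\prod_{i<j}\Bigl(1-\frac{x_j}{x_i}\Bigr)\sum_{\lambda}h_\lambda(\alpha)\x^\lambda=\x^{\delta}\sum_\lambda s_\lambda(\alpha)\x^\lambda .
\]
We cancel the monomial $\x^\delta$, which is invertible in Laurent series. Then we multiply both sides by $\prod_{i<j}(1-x_j/x_i)^{-1}$, which yields the claimed identity.

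The only point requiring care is that these products and inverses are well defined on series $\sum_{\lambda\in\ZZ^k}$ that are unbounded in both directions. The left side is harmless: $h_\lambda(\alpha)=0$ unless every $\lambda_i\ge0$, so $\sum h_\lambda\x^\lambda$ is a power series. On the right side, $s_\lambda(\alpha)$ lies in a fixed graded piece of $M(1)$ of degree $|\lambda|$. In each such degree only finitely many $\lambda$ with bounded-below entries contribute, by Remark \ref{rem3.5} (the $\lambda$ with $s_\lambda\neq0$ satisfy $\lambda+\delta=w(\mu+\delta)$, so $\lambda_i\ge -(k-1)$).

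It therefore suffices to check identities degree by degree in the grading of $M(1)$. Within a fixed degree, both sides live in the ring $\ZZ((x_k))((x_{k-1}))\cdots((x_1))$-type completion in which $\prod(1-x_j/x_i)$ is a unit with inverse given by the geometric expansions. Multiplication there is associative, so the manipulation above is legitimate.

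I expect this justification of convergence and associativity to be the main, though minor, obstacle. I would handle it by first noting that all $\lambda$ occurring on the right satisfy $\lambda_i\ge -(k-1)$. After multiplying by $\x^\delta$ this makes all exponents nonnegative, so everything can be carried out in $M(1)_\ZZ[[x_1,\dots,x_k]]$ with $x_i^{-1}$ factors appearing only in the controlled form $x_j/x_i$, $i<j$.
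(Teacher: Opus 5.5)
Your proof is correct and follows the route the paper intends: the paper gives no separate proof but states the lemma immediately after \eqref{eq3}, and the lemma is exactly \eqref{eq3} once you write $\Delta=\x^{\delta}\prod_{i<j}(1-x_j/x_i)$ and invert the product. Your degree-by-degree finiteness check (each graded piece involves only finitely many $\lambda$, all with $\lambda_i\ge -(k-1)$) supplies the needed rigor.

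One small slip concerns the ring you name. Under the usual convention $R((x))((y))=(R((x)))((y))$, the ring $\ZZ((x_k))\cdots((x_1))$ inverts $1-x_j/x_i$ as $-\sum_{n\ge1}(x_i/x_j)^n$, so the ring that matches your geometric expansion is $M(1)_\ZZ((x_1))((x_2))\cdots((x_k))$. This does not affect the conclusion, since in each fixed degree both generating series are Laurent polynomials related by multiplication by $\prod_{i<j}(1-x_j/x_i)$.
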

 
 By  Lemma \ref{innerprod}, we also have 
 \begin{equation}\label{ss}
 \begin{split}
  & \sum_{\lambda, \mu \in \ZZ^k} \langle s_\lambda(\alpha), s_\mu(\beta)  \rangle \x^\lambda \y^\mu\\
  = &  
\left[\prod_{1\leq i<j\leq k} (1-\frac{x_j}{x_i}) (1-\frac{y_j}{y_i}) \right] \prod_{1 \leq i,j \leq k}  (1- x_iy_j)^{\langle \alpha, \beta\rangle}
\end{split}
 \end{equation}
In particular, if $\langle  \alpha, \beta \rangle= -1$, then   $\langle s_\lambda(\alpha), s_\mu(\beta)  \rangle=\delta_{\lambda, \mu}$  for any $\lambda, \mu\in \mathcal{P}$. (cf.  \cite[Proposition 3.6]{ivoa}). 
 
Let $\{\gamma_1, \dots, \gamma_d\}$ be a basis of $L$ and let 
$\{\beta_1, \dots, \beta_d\} $ be the basis of $L^*$ dual to $\{\gamma_1, \dots, \gamma_d\}$. 
It is also shown in \cite{ivoa} that the set 
\[
\{s_{\lambda_1}(\gamma_1)\cdots s_{\lambda_d}(\gamma_d)e^{\alpha}\mid
\lambda_1, \dots, \lambda_d \in \mathcal{P}, \alpha\in L\}
\] 
is a basis of $V_{L,\ZZ}$. Moreover, 
the set 
\[
\{s_{\lambda_1}(-\beta_1)\cdots s_{\lambda_d}(-\beta_d)e^{-\alpha}\mid
\lambda_1, \dots, \lambda_d \in \mathcal{P}, \alpha\in L\}
\] 
is a dual basis with respect to the bilinear form $\langle \ ,\ \rangle$. That means 
\begin{equation}\label{dualBasis}
\langle s_{\lambda_1}(\gamma_1)\cdots s_{\lambda_d}(\gamma_d)e^{\alpha}, 
s_{\lambda_1'}(-\beta_1)\cdots s_{\lambda_d'}(-\beta_d)e^{\beta}\rangle 
=\delta_{(\lambda_1, \dots, \lambda_d), (\lambda_1', \dots, \lambda_d')} \delta_{-\alpha, \beta}. 
\end{equation}
We remark that  $s_{\lambda_1'}(-\beta_1)\cdots s_{\lambda_d'}(-\beta_d)e^{\beta}$ is defined on $ V_{L,\QQ}= \QQ\otimes V_{L,\ZZ}$. 

The proof of the following result can be found in \cite{Mu}. 
\begin{prop}[\cite{Mu}]\label{simple}
	Let $L$ be a positive definite even lattice. Let $V_{L,\ZZ}$  be the standard integral form of the lattice VOA $V_L$. Let $F$ be a field of characteristic $p>0$.  Then $V_{L,F}:= F\otimes V_{L,\ZZ}$ is a simple vertex operator algebra if and only if  $(p, \det(L))=1$.    
\end{prop}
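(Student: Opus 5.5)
The plan is to reduce simplicity of $V_{L,F}$ to nondegeneracy of the induced form $\langle\ ,\ \rangle_F$, and then decide nondegeneracy using Lemma \ref{innerprod}, the dual-basis formula \eqref{dualBasis} and the Smith basis of $L\subseteq L^*$. The first step is the general reduction. By invariance, the radical $\mathrm{Rad}$ of $\langle\ ,\ \rangle_F$ is an ideal of $V_{L,F}$. Conversely, let $I$ be a nonzero ideal. Since the weight-zero space is $F\vac$, I will use invariance to pass from $v\in I$ and $u$ with $\langle u,v\rangle_F\neq 0$ to a nonzero multiple of $\vac$ lying in $I$, so that $I=V_{L,F}$. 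I expect this to follow Li's argument \cite{Li} with some care in characteristic $p$, where weights are only defined mod $p$; to handle this I would use the finer grading by $L$ and by the integral degree, which the basis of $V_{L,\ZZ}$ respects. The conclusion of the first step is that $V_{L,F}$ is simple if and only if $\mathrm{Rad}=0$.

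Now suppose $(p,\det L)=1$. I would take a basis $\{\gamma_i\}$ of $L$ with dual basis $\{\beta_i\}$ of $L^*$. Because $\det L$ is a unit in $\ZZ_{(p)}$, we have $L^*\subseteq \ZZ_{(p)}\otimes L$. For $c\in\ZZ_{(p)}$ and $\gamma\in L$, the series $E^-(-c\gamma,z)=E^-(-\gamma,z)^c$ is expanded by the binomial series. Its coefficients are polynomials in the $s_{\gamma,m}$ with $\ZZ_{(p)}$ coefficients, since $\binom{c}{j}\in\ZZ_{(p)}$. Hence each $s_{\lambda}(-\beta_i)$ lies in $\ZZ_{(p)}\otimes V_{L,\ZZ}$. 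By \eqref{dualBasis}, the Gram matrix on each finite-rank graded piece of $V_{L,\ZZ}$ is then invertible over $\ZZ_{(p)}$. Its reduction mod $p$ is therefore nonsingular, so the form on $V_{L,F}$ is nondegenerate and $V_{L,F}$ is simple.

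For the converse, suppose $p\mid\det L$, and take the Smith basis $\alpha_i=a_ix_i$ with $p\mid a_d$. Then $\langle\alpha_d,\beta\rangle=a_d\langle x_d,\beta\rangle\in p\ZZ$ for every $\beta\in L$. Using Lemma \ref{innerprod} with $k=1$, the pairing $\langle s_{\alpha_d,1}e^{\gamma}, s_{\beta_1,n_1}\cdots s_{\beta_\ell,n_\ell}e^{-\gamma}\rangle$ is the coefficient of $w$ in $\prod_j(1-wx_j)^{\langle\alpha_d,\beta_j\rangle}$. That coefficient is $-\sum_j\langle\alpha_d,\beta_j\rangle x_j$, and all its coefficients are divisible by $p$. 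So $\alpha_d(-1)e^{\gamma}$ lies in $\mathrm{Rad}$. It is nonzero in $V_{L,F}$ because $\alpha_d$ is primitive in $L$: it is part of a $\ZZ$-basis, and the degree-one part of $M(1)_\ZZ$ is $L(-1)$. Thus $\mathrm{Rad}\neq 0$ and $V_{L,F}$ is not simple.

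The main obstacle is the first step, specifically showing in characteristic $p$ that every nonzero ideal meets the vacuum. The lattice-and-degree grading argument is the part I would check most carefully. Alternatively, for the converse direction it suffices to observe directly that $\mathrm{Rad}$ is a proper nonzero ideal, since $\vac\notin\mathrm{Rad}$. That already gives non-simplicity without needing the full equivalence.
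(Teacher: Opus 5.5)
Your proposal is correct and follows essentially the paper's route (Muller's argument). When $p\nmid\det(L)$, $p$-integrality of the dual basis $s_{\lambda_1}(-\beta_1)\cdots s_{\lambda_d}(-\beta_d)e^{\beta}$ together with \eqref{dualBasis} gives nondegeneracy of $\langle\ ,\ \rangle_F$, and Li's radical argument turns this into simplicity; when $p\mid\det(L)$, you exhibit a nonzero element of the proper ideal $\mathrm{Rad}$ exactly as in Lemma \ref{sRi}.

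On the characteristic-$p$ worry you flag, you do not need to know that the ideal $I$ is graded. If $N$ is the top integral degree of $v\in I$, any mode carrying $V_N$ into $V_0=F\vac$ kills the lower-degree components of $v$. The invariance identity writes $\langle u,v^{(N)}\rangle$ as a sum of $\vac$-coefficients of such modes, and it involves only $L(1)^k/k!$, which preserves $V_{L,\ZZ}$. Hence a nonzero multiple of $\vac$ lies in $I$.
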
 

The idea of the proof in \cite{Mu}  is to show that 
\[
s_{\lambda_1}(-\beta_1)\cdots s_{\lambda_d}(-\beta_d), \quad 
\lambda_1, \dots, \lambda_d \in \mathcal{P}, 
\]
can be expressed as a  finite linear combination of polynomials of the form 
\[
s_{\alpha_1, n_1} \cdots s_{\alpha_k,n_k}, \quad \alpha_i \in \{ \gamma_1, \dots, \gamma_d\},\  n_i\in \NN
\]
with the coefficients of the form $a/b$ with $b$ relatively prime to $a$ and $p$, i.e., $(b, ap)=1$ if $(p, \det(L))=1$.  Therefore,  $s_{\lambda_1'}(-\beta_1)\cdots s_{\lambda_d'}(-\beta_d)e^{\beta}$ can be expressed as a well-defined element in $V_{L,F}$ and thus the bilinear form $\langle \ ,\ \rangle$ is non-degenerate 
on $V_{L,F}$  by \eqref{dualBasis}. 

\begin{rem}\label{rem3.9}
It is well-known \cite{D} that the inequivalent irreducible modules for $V_L$ (over $\CC$) are given by 
$V_{\beta+L}= M(1) \otimes \CC\{\beta+L\}$, where $\beta+L \in L^*/L$. 

Set $V_{\beta+L, \ZZ}$ to be  the $V_{L,\ZZ}$ submodule of $V_{\beta+L}$ generated by $e^\beta$. 
Then  $V_{\beta+L, \ZZ}$ is an integral form of $V_{\beta+L}$ for any $\beta\in L^*$. 

When $(\mathrm{char}\,F, \det(L))=1$, $V_{L,F}$ is a simple vertex operator algebra and 
$F\otimes V_{\beta+L, \ZZ}$ is an irreducible module
of $V_{L,F}$  for any $\beta\in L^*$ \cite{Mu}. 	
\end{rem}

\section{Radical of $V_{L,F}$ with $p\mid \det(L)$} 
In this section, we study the radical of the symmetric invariant bilinear form $\langle\ ,\ \rangle$ on $V_{L,F}$ if the characteristic $p$  of $F$ divides $\det(L)$.

For $i\in \NN=\{0,1,2, \dots\}$,  
set  $N_i= L\cap (p^iL^*)$. Then   for any $a\in N_i$, we have $\langle a, L\rangle \subseteq p^i \ZZ$ and \[
L=N_0 \supseteq N_1\supseteq \cdots \supseteq N_j \supseteq \cdots 
\]

\begin{lem}\label{sRi}
Let $\gamma \in N_i$. For any $n\in \ZZ$ such that $p^i\not|\,n$, we have 
\[
\langle s_{\gamma,n}, M(1)_\ZZ\rangle \equiv 0  \mod p. 
\]
\end{lem}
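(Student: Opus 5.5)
The plan is to compute the pairing directly from the generating function in Lemma \ref{innerprod}, reduce it modulo $p$, and use the Frobenius identity $(1-y)^{p^i}\equiv 1-y^{p^i}$ to see that only powers of $w^{p^i}$ survive.

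By Notation \ref{standardif}, $M(1)_\ZZ$ (the $e^0$-part of $V_{L,\ZZ}$) is spanned over $\ZZ$ by monomials $s_{\beta_1,m_1}\cdots s_{\beta_\ell,m_\ell}$ with $\beta_j\in L$ and $m_j\ge 0$, including the vacuum for $\ell=0$. The pairing is bilinear, so it suffices to prove the congruence against each such monomial.

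Apply Lemma \ref{innerprod} with $k=1$, $\alpha_1=\gamma$, $\alpha=\beta=0$. This gives the identity of formal power series
\[
\sum_{n,m_1,\dots,m_\ell\ge 0}\langle s_{\gamma,n}, s_{\beta_1,m_1}\cdots s_{\beta_\ell,m_\ell}\rangle\, w^{n}x_1^{m_1}\cdots x_\ell^{m_\ell}=\prod_{j=1}^{\ell}(1-wx_j)^{\langle\gamma,\beta_j\rangle}.
\]
All coefficients on both sides are integers: the left side because the form is $\ZZ$-valued on $V_{L,\ZZ}$, the right side because $(1-y)^{c}$ has integer binomial coefficients for every $c\in\ZZ$. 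So we may reduce the identity modulo $p$, in $\FF_p[[w,x_1,\dots,x_\ell]]$.

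Since $\gamma\in N_i=L\cap p^iL^*$, each exponent is $\langle\gamma,\beta_j\rangle=p^i c_j$ with $c_j\in\ZZ$. In characteristic $p$ we have $(1-y)^{p^i}=1-y^{p^i}$. Both sides are units in the power series ring, so the identity also holds for negative powers, giving
\[
(1-wx_j)^{p^ic_j}\equiv (1-w^{p^i}x_j^{p^i})^{c_j}\pmod p.
\]
Hence the right-hand side is congruent mod $p$ to a power series in $w^{p^i}$ and the $x_j$. Therefore the coefficient of $w^n x_1^{m_1}\cdots x_\ell^{m_\ell}$ is $\equiv 0 \pmod p$ whenever $p^i\nmid n$. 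The case $\ell=0$ is trivial, since $\langle s_{\gamma,n},\vac\rangle=0$ for $n\neq 0$. By bilinearity, $\langle s_{\gamma,n},M(1)_\ZZ\rangle\subseteq p\ZZ$.

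There is no real obstacle here. The only point needing care is justifying the Frobenius step for negative exponents $c_j$, which follows from the inverse of a unit in the power series ring being unique.
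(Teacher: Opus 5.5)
Your proof is correct and follows the paper's argument: you identify $\langle s_{\gamma,n}, s_{\beta_1,m_1}\cdots s_{\beta_\ell,m_\ell}\rangle$ as a coefficient of $\prod_j (1-wx_j)^{\langle\gamma,\beta_j\rangle}$ via Lemma~\ref{innerprod}, then use $p^i\mid\langle\gamma,\beta_j\rangle$. You also spell out the Frobenius congruence $(1-y)^{p^i c}\equiv(1-y^{p^i})^{c}\pmod p$ for negative $c$, using inverses in the power series ring, a step the paper leaves implicit.
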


\begin{proof}
By Lemma \ref{innerprod},  
$\langle s_{\gamma,n}, s_{\beta_1,n_1} \dots s_{\beta_\ell,n_\ell} \rangle $
is the coefficient of $w^{n} x_1^{n_1} \cdots x_\ell^{n_{\ell}} $ in $\prod_{j=1}^\ell (1-wx_j)^{\langle \gamma, \beta_j \rangle}$. 

Since  $\gamma \in N_i$, $p^i| {\langle \gamma, \beta_j \rangle}$.  
Therefore, the coefficient of  $w^{n} x_1^{n_1} \cdots x_\ell^{n_{\ell}} $ in $\prod_{j=1}^\ell (1-wx_j)^{\langle \gamma, \beta_j \rangle}$ is divisible by $p$  unless
$p^i | n$. 
\end{proof}

\begin{lem}\label{nzero}
Let $\gamma \in N_1$. Suppose $\langle \gamma, \gamma\rangle =p^k r$ with $(p,r)=1$ and $k\ge 0$. 
Then $\langle s_{\gamma,n}, s_{\gamma,n}\rangle \not \equiv 0 \mod p$ if $n=p^k$. 
\end{lem}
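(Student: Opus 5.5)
By Lemma \ref{innerprod} with $k=\ell=1$, $\alpha_1=\beta_1=\gamma$ and $\alpha=\beta=0$, the value $\langle s_{\gamma,n}, s_{\gamma,n}\rangle$ is the coefficient of $w^n x^n$ in $(1-wx)^{\langle\gamma,\gamma\rangle}$. Setting $t=wx$, this is the coefficient of $t^n$ in $(1-t)^{N}$ with $N=\langle \gamma,\gamma\rangle=p^k r$, namely
\[
\langle s_{\gamma,n}, s_{\gamma,n}\rangle=(-1)^n\binom{N}{n}.
\]
So the lemma reduces to showing $\binom{p^k r}{p^k}\not\equiv 0 \pmod p$ whenever $(p,r)=1$. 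Note that $N\ge 0$ since $L$ is positive definite, so this is an ordinary binomial coefficient.

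Over $\FF_p[t]$ the Frobenius map is additive, so
\[
(1-t)^{p^k r}=\bigl((1-t)^{p^k}\bigr)^r\equiv (1-t^{p^k})^r \pmod p .
\]
The coefficient of $t^{p^k}$ on the right-hand side is $-r$, coming from the term $\binom{r}{1}(-t^{p^k})$. Since $(p,r)=1$, we have $-r\not\equiv 0\pmod p$. Hence $\binom{p^kr}{p^k}\equiv r\pmod p$ and $\langle s_{\gamma,p^k},s_{\gamma,p^k}\rangle\equiv \pm r\not\equiv 0\pmod p$. One could equally invoke Lucas' theorem here: the base-$p$ digit of $p^k r$ in position $k$ is $r \bmod p\neq 0$, and every other digit of $p^k$ is $0$.

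There is no real obstacle. The only care needed is to read Lemma \ref{innerprod} correctly, with the pair of vectors of the form $s_{\gamma,n}e^0$ and $\delta_{0,0}=1$. The hypothesis $\gamma\in N_1$ is not needed for this computation. It only guarantees $k\ge 1$ when the lemma is used, since $p\mid\langle\gamma,\gamma\rangle$.
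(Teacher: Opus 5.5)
Your proof is correct and follows the paper's argument: both read off $\langle s_{\gamma,p^k}, s_{\gamma,p^k}\rangle$ as the coefficient of $(wx)^{p^k}$ in $(1-wx)^{p^k r}$ and obtain $-r$ modulo $p$. Your Frobenius step $(1-t)^{p^kr}\equiv(1-t^{p^k})^r \pmod p$ supplies the justification the paper leaves implicit, and it also corrects the paper's ``$=-r$'', which only holds as a congruence modulo $p$; you are also right that the hypothesis $\gamma\in N_1$ plays no role in the computation.
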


\begin{proof}
Since $\langle s_{\gamma,n}, s_{\gamma,n}\rangle $
is the coefficient of $(wx)^n$ in $(1-wx)^{\langle \gamma, \gamma\rangle}$, 
we have $\langle s_{\gamma,n}, s_{\gamma,n}\rangle =-r \not \equiv 0 \mod p$ if $n=p^k$.  
\end{proof}

Since $L^*/L$ is a finite abelian group, there are positive integers  $m_1, \dots, m_r$ with 
$m_1|m_2|\cdots |m_r$ such that 
\[
L^*/L \cong \ZZ_{m_1}\times \cdots \times \ZZ_{m_r}
\]
as an abelian group.

  Let $d$ be the rank of $L$. Set $a_1=\cdots= a_{d-r}=1$ and $a_{d-r+i}=m_i$ for $i=1,\dots,r$.  Then there is a basis  $\{x_1, \dots, x_d\}$ of $L^*$ such that  $\{a_1x_1, \dots, a_d x_d\}$ is a basis of $L$.

Let $\alpha_i=a_ix_i$ and let  $\{\beta_1, \dots, \beta_d\}$ be the  basis of $L^*$ dual to $ \{ \alpha_1, \dots, \alpha_d\}$, i.e., $\langle \alpha_i,\beta_j\rangle=\langle a_i x_i ,\beta_j\rangle =\delta_{i,j}$. 
Then we also have $\langle  x_i ,a_j\beta_j\rangle =\delta_{i,j}$ for any $i,j$. 
It implies $ \{ a_1\beta_1, \dots, a_d \beta_d\}$
is dual to $\{x_1, \dots ,x_n\}$, which is also a basis of $L$. 

\begin{nota} \label{si}
Let $ \{ \alpha_1, \dots, \alpha_d\} = \{a_1x_1, \dots, a_dx_d\}$ be the basis of $L$ described above.  For each $i=1, \dots, d$, let $s_i$ be the largest non-negative integer such that $a_i=p^{s_i}r_i$.  
Set  $\gamma_i= p^{s_i} \beta_i$. Then $ \langle \alpha_i, \gamma_j\rangle = p^{s_i} \delta_{i,j} $. 
\end{nota} 

Let $M=\mathrm{Span}_\ZZ\{ \gamma_i\mid i=1, \dots n\}$. Then $L^*\supseteq  M\supseteq L$ and the index 
\[
|M/L| = |L^*/L| / |L^*/M| = r_1\cdots r_n,
\]
which is relatively  prime to $p$. 

As a consequence, we have the following result. 
\begin{lem}
There is a subset $\{\gamma_1, \dots, \gamma_n\}$ of $L^*$ such that 
\[
\langle \alpha_i, \gamma_j\rangle =p^{s_i} \delta_{i,j}\quad \text{ and }\quad  \gamma_i =\sum_{j=1}^n  \frac{c_{i,j}}{b_{i,j}} \alpha_j, 
\]
where $c_{i,j}, b_{i,j} \in \ZZ$ and $(b_{i,j}, p)=1$. 
\end{lem}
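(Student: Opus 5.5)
We take $\gamma_i=p^{s_i}\beta_i$ as in Notation \ref{si}, so the first property $\langle \alpha_i,\gamma_j\rangle = p^{s_i}\delta_{i,j}$ holds by construction, since $\{\beta_j\}$ is dual to $\{\alpha_i\}$. It remains to write each $\gamma_i$ as a combination of the $\alpha_j$ whose coefficients have denominators prime to $p$.

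\textbf{Step 1: denominators of $\beta_i$.} Since $\{\alpha_j\}$ is a $\QQ$-basis of $\QQ\otimes L$, write $\beta_i=\sum_j q_{i,j}\alpha_j$ with $q_{i,j}\in\QQ$. Pairing with $\beta_k$ gives $q_{i,j}=(G^{-1})_{i,j}$, where $G=(\langle\alpha_i,\alpha_j\rangle)$ is the Gram matrix of $L$. A cleaner way to see the denominators is to use the Smith-type bases. From $\langle x_i,a_j\beta_j\rangle=\delta_{i,j}$ we get that $\{a_j\beta_j\}$ is a basis of $L$ (noted in the text). Hence $a_j\beta_j=\sum_k t_{j,k}\alpha_k$ with $t_{j,k}\in\ZZ$, because both $\{a_j\beta_j\}$ and $\{\alpha_k\}$ are $\ZZ$-bases of $L$. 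Therefore $\beta_j=\sum_k \frac{t_{j,k}}{a_j}\alpha_k$.

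\textbf{Step 2: clear the $p$-part.} Multiplying by $p^{s_j}$ gives
\[
\gamma_j=p^{s_j}\beta_j=\sum_k \frac{p^{s_j}t_{j,k}}{p^{s_j}r_j}\alpha_k=\sum_k\frac{t_{j,k}}{r_j}\alpha_k .
\]
So we may take $c_{j,k}=t_{j,k}$ and $b_{j,k}=r_j$. Since $s_j$ is the full $p$-adic valuation of $a_j$, we have $(r_j,p)=1$, which proves the lemma.

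\textbf{Alternative check.} Equivalently, we have $L\subseteq M$ with $|M/L|=r_1\cdots r_n$ prime to $p$, so $(r_1\cdots r_n)M\subseteq L$. This gives denominators dividing $r_1\cdots r_n$, and the argument needs nothing beyond the index computation stated just before the lemma.

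The only step needing care is that $\{a_j\beta_j\}$ really is a $\ZZ$-basis of $L$, which makes the $t_{j,k}$ integers. This follows because $\{x_i\}$ is a basis of $L^*$ and $L=(L^*)^*$, the dual of $L^*$ (as $L$ is a full-rank integral lattice). The rest is bookkeeping.
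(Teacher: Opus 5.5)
Your proof is correct and is essentially the paper's argument: the paper takes $\gamma_i=p^{s_i}\beta_i$ and notes that $M=\mathrm{Span}_\ZZ\{\gamma_i\}$ satisfies $L\subseteq M\subseteq L^*$ with $|M/L|=r_1\cdots r_n$ prime to $p$, which is exactly your ``alternative check''. Your main computation $r_j\gamma_j=a_j\beta_j\in L$ is the element-wise form of that index argument (it is also what gives $L\subseteq M$), and it yields the slightly sharper denominators $b_{j,k}=r_j$.
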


Next we will describe the radical $\mathrm{Rad}$ of $\langle\ , \ \rangle_F$  and  the quotient $V_{L,F}/\mathrm{Rad}$ using Lemmas \ref{innerprod} and \ref{sRi}.

Let  $\alpha_i, \beta_i,\gamma_i$ and $s_i$  be defined as in Notation \ref{si}. 
Let $K$ be the ideal of $M(1)_F$ generated by 
$\{ s_{\alpha_i,n}\mid n\in \ZZ_+, {p^{s_i}\not| n}, i=1,\dots,d\}$.  It is clear that $K \subseteq \mathrm{Rad}$.  
We also set    
\[
M_F = F[s_{\alpha_i, m}\mid    m\in \ZZ_+,\  p^{s_i}| m \text{ and }i=1, \dots, n]
\] 
be the set of polynomials generated by  $s_{\alpha_i, m}$, where $m\in \ZZ_+$ and $m$ is divisible by $p^{s_i}$. Then $M(1)_F = M_F \oplus K$. 

\begin{nota}\label{PM}
Let $P_M:M(1)_F \to M_F$ be the natural projection. Then for any $u\in M(1)_F$, 
$u-P_M(u)\in K \subset \mathrm{Rad}$.  
\end{nota}

Denote 
\[
U_F = M_F \otimes F\{L\} \subset  V_{L,F}. 
\]
Then $V_{L,F}=U_F \oplus K\otimes F\{L\}$ as a vector space. Note also that $K\otimes F\{L\} \subseteq \mathrm{Rad}$. 

The following is the main theorem of this article. 

\begin{thm}\label{main}
Let $L$ be a positive definite even lattice and let $F$ be a field of characteristic $p>0$. 
Then  the bilinear form on $U_F$ is non-degenerate and $\mathrm{Rad}=K\otimes F\{L\}$. As a vector space, 
\[
V_{L,F}/ Rad\cong U_F\cong  F[ s_{\alpha_i, p^{s_i} n}\mid  n\in \ZZ_+, i=1, \dots,d] \otimes F\{L\}. 
\]
\end{thm}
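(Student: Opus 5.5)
The plan is to show directly that $\mathrm{Rad}\cap U_F=0$. Since $V_{L,F}=U_F\oplus K\otimes F\{L\}$ and $K\otimes F\{L\}\subseteq \mathrm{Rad}$, this forces $\mathrm{Rad}=K\otimes F\{L\}$. It also gives $V_{L,F}/\mathrm{Rad}\cong U_F$ and the non-degeneracy of the form restricted to $U_F$.

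\textbf{Step 1: reduce to a pairing problem on $M_F$.} By Lemma \ref{innerprod}, $\langle M(1)e^\alpha, M(1)e^\beta\rangle=0$ unless $\beta=-\alpha$. So it suffices to show the following: for each nonzero $u\in M_F$ and each $\alpha$, there is $v\in M_F$ with $\langle ue^\alpha, ve^{-\alpha}\rangle_F\neq 0$. By Lemma \ref{innerprod} the pairing does not depend on $\alpha$, so we may take $\alpha=0$. Moreover, because $K\subseteq\mathrm{Rad}$, Notation \ref{PM} gives $\langle u,v\rangle=\langle u,P_M(v)\rangle$ for every $v\in M(1)_F$. It is therefore enough to find the partner $v$ anywhere in $M(1)_F$.

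\textbf{Step 2: the two bases.} For $M_F$ I will use the elements $s^{(s_1)}_{\lambda_1}(\alpha_1)\cdots s^{(s_d)}_{\lambda_d}(\alpha_d)$ with $\lambda_i\in\mathcal P$. Here $s^{(s)}_\lambda(\alpha)=\det(s_{\alpha,p^s(\lambda_i-i+j)})$, so this is the Jacobi--Trudi determinant in the variables $s_{\alpha,p^s n}$. By the usual unitriangularity of Jacobi--Trudi over $\ZZ$, these elements form an $F$-basis of $M_F$.

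The candidate dual elements are $s^{(s_1)}_{\mu_1}(-\gamma_1)\cdots s^{(s_d)}_{\mu_d}(-\gamma_d)$. By the preceding lemma, each $\gamma_j$ is a combination of the $\alpha_i$ with coefficients in $\ZZ_{(p)}$. Hence $E^-(\gamma_j,z)=\prod_i E^-(-\alpha_i,z)^{c_{ji}/b_{ji}}$, and the binomial series $(1+t)^{c/b}$ with $(b,p)=1$ has $p$-integral coefficients. It follows that every $s_{-\gamma_j,n}$ is a $\ZZ_{(p)}$-polynomial in the $s_{\alpha_i,m}$, so it defines an element of $M(1)_F$. This is the same mechanism as in \cite{Mu}.

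\textbf{Step 3: compute the pairing matrix mod $p$.} By Lemma \ref{innerprod}, the generating function for $\langle \prod_a s_{\alpha_i,m_a},\prod_b s_{-\gamma_j,n_b}\rangle$ is $\prod_{a,b}(1-w_ax_b)^{-\langle\alpha_i,\gamma_j\rangle}$. This equals $1$ if $i\neq j$, and equals $\prod_{a,b}(1-w_ax_b)^{-p^{s_i}}$ if $i=j$. In characteristic $p$ we have $(1-wx)^{-p^{s}}=(1-w^{p^s}x^{p^s})^{-1}$. Hence, mod $p$, the pairing restricted to the generators $s_{\alpha_i,p^{s_i}n}$ and $s_{-\gamma_i,p^{s_i}n}$ reproduces the $\langle\alpha,\beta\rangle=-1$ situation of \eqref{ss}, with $w_a,x_b$ replaced by $w_a^{p^{s_i}},x_b^{p^{s_i}}$.

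Next I substitute the Vandermonde/Lemma \ref{slambda1} manipulation with $x_i\mapsto x_i^{p^{s}}$. The Cauchy identity then gives
\[
\langle s^{(s_i)}_\lambda(\alpha_i),s^{(s_i)}_\mu(-\gamma_i)\rangle\equiv\delta_{\lambda,\mu}\pmod p .
\]
Cross terms between different $i$ contribute trivially, so the full products pair by $\prod_i\delta_{\lambda_i,\mu_i}$.

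\textbf{Step 4: conclude.} Given nonzero $u\in M_F$, expand it in the basis of Step 2 and pick a term whose coefficient is nonzero. Take $v$ to be the corresponding product of $s^{(s_i)}_{\mu_i}(-\gamma_i)$. Then $\langle u,v\rangle_F\neq0$, and by Step 1 so is $\langle u,P_M(v)\rangle_F$.

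\textbf{Main obstacle.} The hardest step is Step 3, specifically justifying the Frobenius substitution inside the determinantal identity. The determinant $s^{(s)}_\mu(-\gamma_i)$ involves only the indices $p^s n$, while $s_{-\gamma_i,m}$ for $p^s\nmid m$ also occurs in the pairing generating function. I would handle this by extracting from $\prod(1-w_a^{p^s}x_b^{p^s})^{-1}$ only the monomials whose exponents are divisible by $p^s$, and then applying \eqref{ss} verbatim in the variables $W_a=w_a^{p^s}$ and $X_b=x_b^{p^s}$.
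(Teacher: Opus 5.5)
Your proposal is correct and is essentially the paper's own proof. It uses the same basis $s^{(s_1)}_{\lambda_1}(\alpha_1)\cdots s^{(s_d)}_{\lambda_d}(\alpha_d)e^\alpha$ of $U_F$, the same $p$-integral partners built from the $\gamma_i$ by the binomial-series argument of Mu, the projection $P_M$ to move the partner into $U_F$, and the Frobenius reduction $(1-wx)^{\pm p^{s}}\equiv(1-w^{p^s}x^{p^s})^{\pm 1}$ that the paper packages as \eqref{ssr}; that reduction is exactly how your ``main obstacle'' is disposed of. One point in your favour: pairing with $-\gamma_i$ gives the Cauchy kernel $\prod(1-W_aX_b)^{-1}$ and hence exactly $\delta_{\lambda,\mu}$, whereas the paper's $+\gamma_i$ gives $\prod(1-W_aX_b)$ and a signed pairing of $\lambda$ with its conjugate partition, which is still non-degenerate, so the theorem is unaffected.
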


We will prove the theorem above using a strategy  as in \cite{Mu}. 
Recall from Definition \ref{deslambda} that $s_{\lambda} (\alpha)= \det(s_{\alpha, \lambda_i -i+j})_{1\leq i,j\leq k}$. We consider a generalization as follows. 

\begin{de}\label{de4.7}
Let $p$ be a prime and let $k$ be a positive integer.  For $s\in \NN$, $\alpha\in L^*$ and $\lambda=(\lambda_1, \dots, \lambda_k)\in \ZZ^k$,  
 we define 
\[
s^{(s)}_\lambda( \alpha) :=  \det(s_{\alpha, p^{s}(\lambda_i -i+j)})_{1\leq i,j\leq k}
= \sum_{w\in Sym_k} \sgn(w) h_{p^s(\lambda+\delta-w(\delta))}(\alpha).
\]
\end{de}

By Lemma \ref{slambda1}, we have 
\begin{equation} \label{form5}
\sum_{\lambda\in \ZZ^{k}} h_{p^s\lambda}(\alpha) \x^{p^s\lambda} = \prod_{1\leq i<j\leq k} (1-(\frac{x_j}{x_i})^{p^s})^{-1}\sum_{\lambda\in \ZZ^k} s^{(s)}_\lambda(\alpha) \x^{p^s\lambda} .   
\end{equation}

Let $\alpha\in N_s \setminus N_{s+1}$. Then there is a $\beta\in L$ such that 
$p^{s}| \langle \alpha, \beta \rangle$ but 
$p^{s+1}\not| \langle \alpha, \beta \rangle$. In this case,  $\langle \alpha, \beta \rangle=p^sr$ with $(p,r)=1$.  Then we have
\[
\sum \langle h_\lambda(\alpha), h_\mu(\beta)\rangle \x^\lambda \y^\mu 
= \prod_{1\leq i, j\leq k} (1-x_iy_j)^ {\langle \alpha, \beta \rangle}
=  \prod_{1\leq i, j\leq k} (1-(x_iy_j)^{p^s})^{r} \mod p.  
\]

By Lemma \ref{sRi}, if $\lambda=(\lambda_1, \dots, \lambda_k)$ and ${p^s\not| \lambda_i}$ for some $i$, then we have $\langle h_\lambda(\alpha), h_\mu(\beta)\rangle\equiv 0 \mod p$ for any $\mu\in \ZZ^k$. Therefore, we also have
\[
\sum \langle h_{p^s\lambda}(\alpha), h_{p^s\mu}(\beta)\rangle \x^{p^s\lambda} \y^{p^s\mu} 
=  \prod_{1\leq i, j\leq k} (1-(x_iy_j)^{p^s})^{r} \mod p.  
\]
By the same argument for \eqref{ss},  we have 
 \begin{equation}\label{ssr}
 \begin{split}
  & \sum_{\lambda, \mu} \langle s^{(s)}_\lambda(\alpha), s^{(s)}_\mu(\beta)  \rangle \x^{p^s\lambda} \y^{p^s\mu}\\
  = &  
\left[\prod_{1\leq i<j\leq k} \left(1-(\frac{x_j}{x_i})^{p^s}\right) 
\left(1-(\frac{y_j}{y_i})^{p^s}\right) \right] \prod_{1 \leq i,j \leq k}  (1- (x_iy_j)^{p^s})^{r}. 
\end{split}
 \end{equation}

\medskip

Let $\alpha_i, x_i, a_i$ and $s_i$, $i=1, \dots,n$  be as in Notation  \ref{si}. 
 
\begin{lem}
The set 
\[
s^{(s_1)}_{\lambda_1}(\alpha_1) \cdots s^{(s_n)}_{\lambda_n}(\alpha_n) e^\alpha
\]
for $\lambda_1, \dots, \lambda_n\in \mathcal{P}$ and $\alpha\in L$ forms an $F$-basis 
of $U_F$. 
\end{lem}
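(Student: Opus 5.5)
The plan is to reduce the statement to the classical fact that Schur polynomials $s_\lambda$, $\lambda\in\mathcal{P}_k$, form a $\ZZ$-basis of symmetric functions in the complete homogeneous generators $h_1,h_2,\dots$, after a change of variables, one lattice direction at a time. First I would separate variables. Since $\{\alpha_1,\dots,\alpha_d\}$ is a basis of $L$ and $\mathfrak h$, the elements $\alpha_i(-n)$ are algebraically independent, and hence so are the $s_{\alpha_i,n}$. Indeed, the $s_{\alpha_i,n}$ are related to $\alpha_i(-n)/n$ exactly as $h_n$ is related to $p_n/n$, which gives $M(1)_\ZZ=\bigotimes_i \ZZ[s_{\alpha_i,n}\mid n>0]$ by \cite[Theorem 3.3]{ivoa}. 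Therefore
\[
M_F=\bigotimes_{i=1}^d F[s_{\alpha_i,p^{s_i}m}\mid m>0].
\]
As $F$-spaces we have $U_F=M_F\otimes F\{L\}$, and $\{e^\alpha\mid\alpha\in L\}$ is a basis of $F\{L\}$. So it suffices to prove, for a single $\alpha=\alpha_i$ and $s=s_i$, that $\{s^{(s)}_\lambda(\alpha)\mid\lambda\in\mathcal{P}\}$ is an $F$-basis of $A:=F[s_{\alpha,p^sm}\mid m>0]$. The statement then follows by taking tensor products.

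For the single-variable claim, I would define the ring homomorphism $\phi:\Lambda_F=F[h_1,h_2,\dots]\to A$ by $h_m\mapsto s_{\alpha,p^sm}$. It is an isomorphism because the $s_{\alpha,p^sm}$ are algebraically independent. By Definition \ref{de4.7},
\[
s^{(s)}_\lambda(\alpha)=\det(s_{\alpha,p^s(\lambda_i-i+j)})=\phi\bigl(\det(h_{\lambda_i-i+j})\bigr)=\phi(s_\lambda).
\]
Here we use the convention $h_0=1=s_{\alpha,0}$ and $h_{n}=0$ for $n<0$, which matches $s_{\alpha,n}=0$ for $n<0$. By the Jacobi--Trudi identity, $\{s_\lambda\mid\lambda\in\mathcal P\}$ is a $\ZZ$-basis of $\Lambda_\ZZ=\ZZ[h_1,h_2,\dots]$, hence an $F$-basis of $\Lambda_F$ after base change. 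Applying $\phi$ then gives the required basis of $A$.

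Alternatively, staying inside the paper's framework: the $s^{(s)}_\lambda(\alpha)$ with $\lambda\in\mathcal P_k$ are unitriangular in the monomials $h_{p^s\mu}(\alpha)$. This follows from \eqref{form5} together with Remark \ref{rem3.5} applied to the rescaled variables: expanding $\prod_{i<j}(1-(x_j/x_i)^{p^s})^{-1}$ expresses each $h_{p^s\mu}$ with $\mu$ a partition as $s^{(s)}_\mu$ plus an integral combination of $s^{(s)}_\nu$ with $\nu$ strictly larger in dominance order. The inverse transition is given by the determinant formula itself, also with integer coefficients. So the transition matrix is integral and invertible over $\ZZ$, and the basis property survives reduction mod $p$.

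The only step I expect to need care is the algebraic independence of $\{s_{\alpha_i,n}\}$ over $F$, that is, that $M(1)_F$ is a polynomial ring in these elements and not merely spanned by them. I would obtain this from the triangularity $s_{\alpha,n}=\alpha(-n)/n+(\text{polynomial in }\alpha(-m),\ m<n)$ over $\QQ$. Combined with the cited fact that the $s_{\alpha_i,n}$-monomials form a $\ZZ$-basis of $M(1)_\ZZ$, this gives independence after tensoring with $F$.
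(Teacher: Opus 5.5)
Your proof is correct, but it is organized differently from the paper's. The paper proves spanning and independence separately. Spanning comes from \eqref{form5}, which writes each product $s_{\alpha_i,m_1}\cdots s_{\alpha_i,m_k}$ with all $m_j$ divisible by $p^{s_i}$ as a combination of the $s^{(s_i)}_\lambda(\alpha_i)$. Linear independence is simply referred to \cite[Proposition 3.6]{ivoa}. You instead observe that $s^{(s)}_\lambda(\alpha)$ is the image of the Schur function $\det(h_{\lambda_i-i+j})$ under the substitution $h_m\mapsto s_{\alpha,p^sm}$. This substitution is a ring isomorphism from $F[h_1,h_2,\dots]$ onto $F[s_{\alpha,p^sm}\mid m>0]$ once the $s_{\alpha_i,n}$ are known to be algebraically independent over $F$. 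The classical fact that Schur functions form a $\ZZ$-basis of $\ZZ[h_1,h_2,\dots]$ then gives spanning and independence at once.

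Your route makes explicit two points the paper leaves tacit. First, $M(1)_F$ is a polynomial ring in the $s_{\alpha_i,n}$; the paper also uses this when it writes $M(1)_F=M_F\oplus K$, and it follows as well from the $\ZZ$-basis $\{s_{\lambda_1}(\gamma_1)\cdots s_{\lambda_d}(\gamma_d)e^\alpha\}$ of $V_{L,\ZZ}$ quoted from \cite{ivoa}. Second, it shows how a statement about the ordinary $s_\lambda(\alpha)$, which is what \cite[Proposition 3.6]{ivoa} concerns, transfers to the rescaled $s^{(s)}_\lambda(\alpha)$: through this injective substitution.

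Your alternative argument, unitriangularity via \eqref{form5} and Remark \ref{rem3.5} with integral transition matrices, is essentially the paper's spanning argument, strengthened so that it also gives independence modulo $p$. The paper's generating-function framing has the advantage that it is the same machinery it reuses for \eqref{ssr} in the next lemma.
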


\begin{proof}
By definition, it is clear that 
$s^{(s_1)}_{\lambda_1}(\alpha_1) \cdots s^{(s_n)}_{\lambda_n}(\alpha_n) e^\alpha$ is in 
$U_F$. By Formula \eqref{form5}, it is also clear that $s_{\alpha_i, m_1} \dots s_{\alpha_i,m_k}$, where  $m_1, \dots, m_k$ are divisible by $p^{s_i}$,  is a linear combination of $s^{(s_i)}_{\alpha_i}(\lambda)$. Thus,
\[
\{ s^{(s_1)}_{\lambda_1}(\alpha_1) \cdots s^{(s_n)}_{\lambda_n}(\alpha_n) e^\alpha
\mid \lambda_1, \dots, \lambda_n\in \mathcal{P}, \alpha\in L\} 
\] 
spans $U_F$. The linear independence follows from \cite[Proposition 3.6]{ivoa}. 
\end{proof}

By \eqref{ssr} and the same argument as in \cite[Proposition 3.6]{ivoa}, we have the following result. 

\begin{lem}
For $\lambda, \lambda' \in \mathcal{P}$ and $\alpha, \gamma\in L$,
\[
\langle s_{\lambda_1}^{(s_1)}(\alpha_1)\cdots s_{\lambda_d}^{(s_d)}(\alpha_d)e^{\alpha}, s_{\lambda_1'}^{(s_1)}(\gamma_1)\cdots s_{\lambda_d'}^{(s_d)}(\gamma_d)e^{\gamma} 
\rangle 
=\delta_{(\lambda_1, \dots, \lambda_d), (\lambda_1', \dots, \lambda_d')} \delta_{-\alpha, \gamma}. 
\] 
\end{lem}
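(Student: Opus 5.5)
The plan is to reduce the computation to a single pair of lattice vectors, exactly as in \cite[Proposition 3.6]{ivoa}, and then use \eqref{ssr}. First I would settle the lattice part and the factorization. By the description preceding Lemma \ref{innerprod}, the value of $\langle u e^\alpha, v e^\gamma\rangle$ vanishes unless $\gamma=-\alpha$, which produces the factor $\delta_{-\alpha,\gamma}$. For $\gamma=-\alpha$, the generating function of Lemma \ref{innerprod} is the product $\prod_{i,j}(1-w_ix_j)^{\langle \alpha_i,\gamma_j\rangle}$. Because $\langle \alpha_i,\gamma_j\rangle = p^{s_i}\delta_{i,j}$ (Notation \ref{si}), every cross factor with $i\neq j$ equals $1$. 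Hence the form on products $\prod_i u_i(\alpha_i)$ against $\prod_j v_j(\gamma_j)$ factors as $\prod_i \langle u_i(\alpha_i), v_i(\gamma_i)\rangle$. Here $u_i,v_i$ are polynomials in the $s_{\alpha_i,m}$, respectively the $s_{\gamma_i,m}$. It is enough to verify the factorization on monomials $h_a(\alpha_i)$, since the $s^{(s)}_\lambda$ are alternating sums of these.

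Next I would treat the single-index case: for $\alpha=\alpha_i$, $\beta=\gamma_i$ and $s=s_i$, show $\langle s^{(s)}_\lambda(\alpha), s^{(s)}_\mu(\beta)\rangle=\delta_{\lambda,\mu}$ for partitions $\lambda,\mu$ of length at most $k$ (with $k$ chosen large). Here $\langle\alpha,\beta\rangle=p^s$, so $r=1$ in \eqref{ssr}. Taking the right-hand side of \eqref{ssr} with $r=1$, I would substitute $X_i=x_i^{p^s}$ and $Y_i=y_i^{p^s}$. It then becomes the classical identity
\[
\prod_{i<j}(1-X_j/X_i)(1-Y_j/Y_i)\prod_{i,j}(1-X_iY_j),
\]
which is precisely the right-hand side of \eqref{ss} with $\langle\alpha,\beta\rangle=-1$ after the sign change $Y\mapsto -Y$. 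It is more cleanly handled by the Cauchy identity. Its coefficients of $X^\lambda Y^\mu$ for $\lambda,\mu\in\mathcal P_k$ are, up to sign conventions, $\delta_{\lambda,\mu}$. This is the same computation as in \cite[Proposition 3.6]{ivoa}, which the paper already invokes for \eqref{ss}. The sign bookkeeping matters because $r=+1$ here while the cited case has exponent $-1$. I would handle it by noting that $s_{-\beta,n}$ is the $n$-th elementary rather than complete symmetric function, so either $s^{(s)}_\mu(\gamma_i)$ pairs with $s^{(s)}_{\mu'}$ under conjugation, or the dual Cauchy identity gives $\delta$ directly. I would check that the statement's indexing is consistent with this on small cases, for example $\langle s_{\alpha,p^s}, s_{\gamma,p^s}\rangle = -1$ by Lemma \ref{nzero}-type reasoning, and adjust signs accordingly.

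The main obstacle is that \eqref{ssr} holds only modulo $p$. It also relies on the fact that the terms $h_\lambda$ with some $p^s\nmid\lambda_i$ pair to zero mod $p$, which is Lemma \ref{sRi} applied to $\gamma_i$. For this I need $\gamma_i\in N_{s_i}$-type divisibility: $\langle \gamma_i, L\rangle\subseteq p^{s_i}\ZZ$. That holds because $\langle \gamma_i,\alpha_j\rangle=p^{s_i}\delta_{ij}$. Also, $s_{\gamma_i,m}$ is defined over $\ZZ_{(p)}$ because $\gamma_i$ is a $p$-integral combination of the $\alpha_j$ (the lemma after Notation \ref{si}), so everything reduces mod $p$ into $F$. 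Finally, combining the factorization with the single-index result gives the displayed orthonormality in $F$.
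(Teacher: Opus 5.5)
You are following the paper's own route: factor the pairing over $i$ via $\langle\alpha_i,\gamma_j\rangle=p^{s_i}\delta_{i,j}$ in Lemma \ref{innerprod}, then evaluate each factor by \eqref{ssr} and the Cauchy-identity argument of \cite[Proposition 3.6]{ivoa}. Your factorization and $p$-integrality remarks are fine. The gap is in the single-index step, and it is not a matter of sign bookkeeping.

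With $\beta=\gamma_i$ you have $r=+1$. The substitution $Y\mapsto -Y$ turns $\prod_{a,b}(1-X_aY_b)$ into $\prod_{a,b}(1+X_aY_b)$, not into $\prod_{a,b}(1-X_aY_b)^{-1}$. So you are not in the case $\langle\alpha,\beta\rangle=-1$ of \eqref{ss}. Take the dual Cauchy identity $\prod_{a,b}(1-X_aY_b)=\sum_\nu(-1)^{|\nu|}s_\nu(X)s_{\tilde\nu}(Y)$, with $\tilde\nu$ the conjugate partition. Multiplying it by $\prod_{a<b}(1-X_b/X_a)(1-Y_b/Y_a)=X^{-\delta}Y^{-\delta}\Delta(X)\Delta(Y)$ gives
\[
\langle s^{(s_i)}_\lambda(\alpha_i),\, s^{(s_i)}_\mu(\gamma_i)\rangle\equiv(-1)^{|\lambda|}\,\delta_{\mu,\tilde\lambda}\pmod p .
\]
This differs from $\delta_{\lambda,\mu}$ by transposing the partition, not just by a sign.

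You can see this directly. The value $\langle s^{(s_i)}_{(2)}(\alpha_i),s^{(s_i)}_{(2)}(\gamma_i)\rangle=\langle s_{\alpha_i,2p^{s_i}},s_{\gamma_i,2p^{s_i}}\rangle$ is the coefficient of $(wx)^{2p^{s_i}}$ in the polynomial $(1-wx)^{p^{s_i}}$, hence $0$. On the other hand $\langle s_{\alpha_i,2p^{s_i}},\,s_{\gamma_i,p^{s_i}}^2-s_{\gamma_i,2p^{s_i}}\rangle=1$, so $(2)$ pairs with $(1,1)$. For odd $p$, your own check $\langle s_{\alpha_i,p^{s_i}},s_{\gamma_i,p^{s_i}}\rangle\equiv-1$ already contradicts the value $1$ at $\lambda_i=\lambda_i'=(1)$.

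So the identity as printed, with $\gamma_i$, is false, and no adjustment of conventions gets you to your closing sentence. The paper's one-line appeal to \cite[Proposition 3.6]{ivoa} hides the same slip, since that result and \eqref{ss} concern exponent $-1$.

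The missing step is to pair against $-\gamma_i$, exactly as \eqref{dualBasis} uses $-\beta_i$. Then $\langle\alpha_i,-\gamma_i\rangle=-p^{s_i}$, so $r=-1$. Since $(1-t)^{-p^{s}}\equiv(1-t^{p^{s}})^{-1}\pmod p$, the ordinary Cauchy identity turns the right side of \eqref{ssr} into $X^{-\delta}Y^{-\delta}\sum_\nu \Delta(X)s_\nu(X)\,\Delta(Y)s_\nu(Y)$. Because $\Delta(X)s_\nu(X)=\sum_{w}\sgn(w)X^{w(\nu+\delta)}$, the coefficient of $X^\lambda Y^\mu$ for partitions $\lambda,\mu$ of length at most $k$ is exactly $\delta_{\lambda,\mu}$.

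Combined with your factorization, this proves the lemma with $-\gamma_i$ in place of $\gamma_i$. Equivalently, keep $\gamma_i$ and replace the right side by $\prod_i(-1)^{|\lambda_i|}\delta_{\lambda_i',\tilde\lambda_i}\cdot\delta_{-\alpha,\gamma}$. Either version gives a signed-permutation Gram matrix, which is all the proof of Theorem \ref{main} needs.
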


Next we recall several results from \cite{Mu}.
\begin{lem}
Suppose that $a$ is an integer and $b$ is a positive integer which is relatively
prime to $a$ and $p$. Then $\binom{a/b}{n}= (a/b)(a/b-1) \cdots (a/b-n+1)/ n!$ is a well defined element in $F$ for any non-negative integer $n$. 
\end{lem}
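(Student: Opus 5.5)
The plan is to show that the rational number $a/b$ is a $p$-adic integer and then reduce $\binom{a/b}{n}$ modulo $p$. Let $\ZZ_{(p)}=\{u/v\in\QQ\mid (v,p)=1\}$ be the localization of $\ZZ$ at the prime ideal $(p)$. It is a subring of $\QQ$, and it has a surjective ring homomorphism $\pi:\ZZ_{(p)}\to \FF_p\subseteq F$ given by $u/v\mapsto \bar u\,\bar v^{-1}$. Since $(b,p)=1$, we have $a/b\in \ZZ_{(p)}$. The statement to prove is therefore that $\binom{x}{n}\in \ZZ_{(p)}$ for every $x\in\ZZ_{(p)}$ and every $n\ge 0$. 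Once this is known, $\binom{a/b}{n}$ means $\pi\bigl(\binom{a/b}{n}\bigr)\in F$. This is the same as the element obtained by computing formally in $F$ with $\bar b^{-1}$, up to cancelling the $p$-part of $n!$.

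For the integrality, the obstacle is that $n!$ may be divisible by $p$, so the numerator $x(x-1)\cdots(x-n+1)$ must supply enough factors of $p$. I will use the standard density argument. The polynomial $f_n(t)=\binom{t}{n}\in\QQ[t]$ takes integer values on $\ZZ$. Fix $N$ larger than $v_p(n!)$. Since $b$ is invertible modulo $p^N$, there is an integer $c$ with $c\equiv a/b \pmod{p^N\ZZ_{(p)}}$; explicitly, $c=a b'$ where $bb'\equiv 1 \pmod{p^N}$. Write $x=a/b=c+p^N y$ with $y\in\ZZ_{(p)}$. Then
\[
n!\,\binom{x}{n}=\prod_{j=0}^{n-1}(x-j)\equiv \prod_{j=0}^{n-1}(c-j)=n!\binom{c}{n} \pmod{p^N\ZZ_{(p)}} .
\]
The right-hand side lies in $n!\,\ZZ$. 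Hence $n!\binom{x}{n}\in n!\ZZ_{(p)}+p^N\ZZ_{(p)}=n!\ZZ_{(p)}$, because $v_p(n!)<N$. It follows that $\binom{x}{n}\in\ZZ_{(p)}$.

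An alternative to the density argument is to invoke the classical fact that integer-valued polynomials are $\ZZ$-linear combinations of the $\binom{t}{n}$, together with continuity in the $p$-adic topology. I expect the congruence argument above to be the cleanest, and I consider it the only nontrivial step.

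Finally, I note two points about the hypotheses. The hypothesis that $b$ is coprime to $a$ is not actually needed; only $(b,p)=1$ is used. The value in $F$ is well defined because $\pi$ is a ring homomorphism, so it does not depend on the representative chosen for the fraction $a/b$.
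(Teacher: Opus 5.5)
The paper does not prove this lemma; it is recalled from \cite{Mu} without argument, so there is no internal proof to compare against. Your proof is correct and complete. Reading ``well defined in $F$'' as $\binom{a/b}{n}\in\ZZ_{(p)}$, followed by reduction $\ZZ_{(p)}\to\FF_p\subseteq F$, is the right interpretation. The integer approximation $c=ab'$ with $a/b-c\in p^N\ZZ_{(p)}$ is legitimate, $\binom{c}{n}\in\ZZ$ holds also for negative $c$, and $p^N\ZZ_{(p)}\subseteq n!\,\ZZ_{(p)}=p^{v_p(n!)}\ZZ_{(p)}$ finishes the argument. You are also right that only $(b,p)=1$ is used.

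For comparison, there are two other standard routes. The first is the Legendre count. Write $\binom{a/b}{n}=\prod_{j=0}^{n-1}(a-jb)/(b^n\,n!)$. Since $b$ is a unit mod $p^k$, the map $j\mapsto a-jb$ is a bijection on $\ZZ/p^k\ZZ$, so at least $\lfloor n/p^k\rfloor$ of the factors are divisible by $p^k$. Summing over $k$ gives $v_p(\text{numerator})\geq v_p(n!)$. This is your argument with the integrality of $\binom{c}{n}$ unpacked.

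The second is the $b$-th root recursion. If $f=1+\sum_{n\geq 1}c_nz^n$ satisfies $f^b=1+z$, comparing coefficients of $z^n$ forces $bc_n\in\ZZ[c_1,\dots,c_{n-1}]$. Hence $\binom{1/b}{n}=c_n\in\ZZ[1/b]$, and then $\binom{a/b}{n}\in\ZZ[1/b]$ by passing to $f^a$ (inverting a series with constant term $1$ if $a<0$). This gives a sharper, $p$-independent statement. Moreover, the same recursion applied to $1+\sum_{n>0}s_{\alpha,n}z^n$ shows directly that $s_{\frac ab\alpha,n}\in\ZZ[1/b][s_{\alpha,m}\mid m>0]$, which is the form in which the lemma is actually used toward Theorem \ref{main}.

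Your route is more elementary and works one prime at a time, which is all the lemma as stated requires.
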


\begin{lem}
Suppose that $a$ is an integer and $b$ is a positive integer which is relatively
prime to $a$ and $p$. For every $n\in \NN$, define $s_{\frac{a}b, n}$ by\
\[
\sum_{n\geq 0} s_{\frac{a}b \alpha,n} z^n =\left( 1+ \sum_{n> 0} s_{\alpha,n} z^n\right )^{\frac{a}b}.  
\] 
Then $s_{\frac{a}b \alpha,n}$ is a finite sum of elements of the form 
$\binom{{a}/b}{k} s_{\alpha, m_1}\cdots s_{\alpha, m_k}$, where $k\geq 0$, $m_1, \dots, m_k>0$.   
\end{lem}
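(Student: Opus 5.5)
The plan is to expand $(1+X)^{a/b}$ with the generalized binomial series, where
\[
X=\sum_{n>0} s_{\alpha,n}z^n \in M(1)_\ZZ[[z]].
\]
The point is that $X$ has zero constant term, so only finitely many powers of $X$ contribute to each coefficient of $z^n$.

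First I will make sense of the definition. Over $\QQ$ we have $1+X=E^-(-\alpha,z)=\exp(Y)$, where $Y=\sum_{n>0}\frac{\alpha(-n)}{n}z^n$ also has zero constant term. For any $c\in\QQ$, the power $(1+X)^{c}$ in $M(1)_\QQ[[z]]$ is defined by the binomial series
\[
(1+X)^c=\sum_{k\ge 0}\binom{c}{k}X^k .
\]
This series is well defined because $X^k\in z^kM(1)_\QQ[[z]]$. The standard formal identity $\sum_k\binom{c}{k}(\exp(Y)-1)^k=\exp(cY)$ holds because both sides are polynomial in $c$ coefficientwise and agree for $c\in\NN$. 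It gives $(1+X)^{a/b}=\exp\bigl(\frac ab Y\bigr)=E^-(-\tfrac ab\alpha,z)$. Hence the $s_{\frac ab\alpha,n}$ defined in the statement agree with the coefficients of Definition \ref{de_san} applied to the vector $\frac ab\alpha\in L\otimes\QQ$, so the notation is consistent.

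Next I will extract the coefficient of $z^n$. Since $X^k$ contributes only in degrees $\ge k$,
\[
s_{\frac ab\alpha,n}=\sum_{k=0}^{n}\binom{a/b}{k}\sum_{\substack{m_1+\cdots+m_k=n\\ m_i>0}} s_{\alpha,m_1}\cdots s_{\alpha,m_k}.
\]
This is a finite sum of terms of exactly the claimed form, with $k\ge 0$ and all $m_i>0$; the term $k=0$ occurs only when $n=0$. By the preceding lemma each $\binom{a/b}{k}$ is a well-defined element of $F$, and each $s_{\alpha,m}$ lies in $M(1)_\ZZ$. The formula therefore also serves as the definition of $s_{\frac ab\alpha,n}$ as an element of $M(1)_F$, compatibly with reduction from $\ZZ_{(p)}$-coefficients.

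The main obstacle is the justification of the identity $(\exp Y)^c=\exp(cY)$ for rational $c$ in the formal power series ring. I would handle it by the polynomial-identity argument above: fix the coefficient of $z^n$, note that both sides are polynomials in $c$ of degree at most $n$ with coefficients in $M(1)_\QQ$, and check equality for all nonnegative integers $c$, where it is just the multiplicativity of $\exp$.
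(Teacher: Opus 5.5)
Your proof is correct. It is the natural argument behind the lemma: expand $(1+X)^{a/b}=\sum_{k\ge 0}\binom{a/b}{k}X^k$ and read off the coefficient of $z^n$. The paper itself recalls this lemma from \cite{Mu} without giving a proof.

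Your additional check that $(1+X)^{a/b}=\exp\bigl(\tfrac ab Y\bigr)$, via the polynomial-in-$c$ argument, is also sound. It is exactly the identity the paper uses implicitly in the proof of Theorem \ref{main}, where it writes $\sum_k s_{\gamma_i,k}z^k=\prod_j\bigl(\sum_m s_{\alpha_j,m}z^m\bigr)^{c_{i,j}/b_{i,j}}$.
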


\noindent \textbf{Proof of Theorem \ref{main}}

By the definition of  $\alpha_i$ and $\gamma_i$ (see Notation \ref{si}), for each $i$, 
\[
\gamma_i =\sum_{j=1}^n  \frac{c_{i,j}}{b_{i,j}} \alpha_j, 
\]
where $c_{i,j}, b_{i,j} \in \ZZ$ and $(b_{i,j}, p)=1$. 
Then  we have 
\[
\sum_{k\geq 0}  s_{\gamma_i,k} z^k  = \prod_{j=1}^n \left( \sum_{m\geq 0}  s_{\alpha_j,m}z^m \right)^{\frac{c_{i,j}}{b_{i,j}}}
\] 
Therefore, $s_{\lambda_i'}^{(s_i)}(\gamma_i)$ can be written as a linear combination of $s_{\beta_1, n_1}\cdots s_{\beta_k, n_k}$ with $\beta_i\in \{\alpha_1, \dots, \alpha_n\}$ and every coefficient has the form $a/b$ with $a,b\in \ZZ$ and $(p,b)=1$. 
Therefore,  $s_{\lambda_1'}^{(s_1)}(\gamma_1)\cdots s_{\lambda_d'}^{(s_d)}(\gamma_d)$ is a well defined element in $M(1)_F$. Set $u= P_M(s_{\lambda_1'}^{(s_1)}(\gamma_1)\cdots s_{\lambda_d'}^{(s_d)}(\gamma_d))$ (see Notation \ref{PM}).  Then  $u\otimes e^{\gamma}\in U_F$ and 
\[
\begin{split}
&\langle s_{\lambda_1}^{(s_1)}(\alpha_1)\cdots s_{\lambda_d}^{(s_d)}(\alpha_d)e^{\alpha}, u\otimes e^{\gamma}\rangle \\
= & 
\langle s_{\lambda_1}^{(s_1)}(\alpha_1)\cdots s_{\lambda_d}^{(s_d)}(\alpha_d)e^{\alpha}, s_{\lambda_1'}^{(s_1)}(\gamma_1)\cdots s_{\lambda_d'}^{(s_d)}(\gamma_d)e^{\gamma} 
\rangle \\
=&\delta_{(\lambda_1, \dots, \lambda_d), (\lambda_1', \dots, \lambda_d')} \delta_{-\alpha, \gamma}. 
\end{split}
\]
Therefore, the bilinear form $\langle \ ,\ \rangle_F$ is non-degenerate on $U_F=M_F \otimes F\{L\}$ and 
$\mathrm{Rad}=K\otimes F\{L\}$. 
\qed

\subsection{Some examples}
Next we study several explicit examples. 

\begin{de}
Let $V=\oplus_{n=0}^\infty V_n $ be a $\ZZ$-graded vertex algebra  over a field $F$ such that $\dim_F V_n < \infty$ for all $n\geq 0$. Then the character of $V$ is defined to be the $q$-series
\[
ch\, V:= \sum_{n=0}^\infty \dim_F(V_n) q^n.
\]
\end{de}

\begin{ex}
Let $L= A_1$ be the root lattice of type $A_1$, i.e., $L=\ZZ\alpha$ with $\langle \alpha, \alpha\rangle=2$. Set $F=\mathbb{F}_2$. 
Then the radical $\mathrm{Rad}$ of $\langle \ ,\ \rangle$ on $V_{L,F}$ is generated 
by  the set of $s_{\alpha,2n+1}$ for all $n\geq 0$, i.e., 
 \[
 \mathrm{Rad} =\mathrm{Span}_F\{ s_{\alpha,2n_1+1}\cdots s_{\alpha,2n_i+1}\mid n_1,  \dots, n_i\geq 0, i>0\}
 \]
The quotient $V_{L,F}/ \mathrm{Rad} \cong F[s_{\alpha, 2n} \mid n\in \ZZ_+] \otimes F\{L\}$ as a vector space.   
In this case, we have 
\[
ch_{V_{L,F}/ \mathrm{Rad}} (q) = \frac{\sum_{n=0}^{\infty} q^{n^2}} {\prod_{n=1}^\infty
(1-q^{2n})}. 
\] 

\end{ex}

\begin{ex}\label{eA2}
Let $L= A_2$ be the root lattice of type $A_2$ and let $F=\mathbb{F}_3$ be a field with $3$ elements. Let $\{\beta_1, \beta_2\}$  be a basis of $L$ such that $\langle \beta_1, \beta_1\rangle=\langle \beta_2, \beta_2\rangle=2$ and $\langle \beta_1, \beta_2\rangle= -1$. So $\det(L)=3$. 

Set $\alpha_1= \beta_1$ and $\alpha_2= \beta_1- \beta_2$. Then $\{\alpha_1, \alpha_2\}$ is also a basis of $L$ and $\langle \alpha_2, L\rangle=3\ZZ$.   
Then 
\[
V_{L,F}/ \mathrm{Rad} \cong F[s_{\alpha_1, n}| n\in \ZZ_+]\otimes F[s_{\alpha_2, 3n}| n\in \ZZ_+]\otimes F\{A_2\}. 
\]
The character  is given by 
\[
ch_{V_{L,F}/ \mathrm{Rad}} (q) = \frac{\theta_{A_2} (q)} {\prod_{n=1}^\infty
(1-q^{n}) (1-q^{3n})}, 
\] 
where $\theta_{A_2}(q) =\sum_{\alpha\in A_2} q^{\langle \alpha, \alpha\rangle /2}$ is the theta series of the lattice $A_2$.  
\end{ex}

\begin{rem}Let $\beta \in L^*$.  
As in Remark \ref{rem3.9},  the $V_{L,\ZZ}$ submodule $V_{\beta+L, \ZZ}$  of $V_{\beta+L}$ generated by $e^\beta$ is an integral form of $V_{\beta+L}$ and $F\otimes V_{\beta+L, \ZZ}$ is a module of $V_{L,F}$  for any commutative ring  $F$.

Let $p$ be a prime and set $L=A_{p-1}$ and $F=\FF_p$. 
Let $\{\alpha_1,\dots  ,\alpha_{p-1}\}$ be a set of simple roots of $A_{p-1}$ and let 
$\gamma_1, \dots, \gamma_{p-1}$ be the fundamental weights, i.e., $\langle \gamma_i, \alpha_i\rangle =1$ and $\langle \gamma_i, \alpha_j\rangle =0$ for $j\neq i$.  Then 
\[
A_{p-1}^*/A_{p-1} = A_{p-1}\cup (\gamma_1+A_{p-1}) \cup \cdots \cup (\gamma_{p-1} +A_{p-1}) \cong \ZZ_p
\]  
and $\langle \gamma_i, \gamma_i\rangle = i(p-i)/p$ for $i=1, \dots, p-1$. 
Let $\beta_i=p\gamma_i$. Then $\beta_i\in A_{p-1} $ and $\langle \beta_i ,A_{p-1}\rangle \subset p\ZZ$. Therefore,  $\beta_i(-1)\cdot\mathbf{1} =s_{\beta_i,1}\in \mathrm{Rad}_{\langle \ , \ \rangle_{V_{A_{p-1},F}}}$ but 
\[
\beta_i(0) e^{\gamma_i} = \langle \beta_i, \gamma_i\rangle e^{\gamma_i} =i(p-i) e^{ \gamma_i}\neq 0 \quad \text{ for  } 1\leq i\leq p-1.
\] 
Since $\mathrm{Rad}$ does not annihilate  $e^{\gamma_i}$,  $V_{\gamma_i+ A_{p-1}, F} $ is not a module for $V_{A_{p-1},F}/\mathrm{Rad}$.    
\end{rem}

\end{document}